\newcommand{\C}{\mathbb{C}}
\newcommand{\QQ}{\mathbb{Q}}
\newcommand{\NN}{\mathbb{N}}
\newtheorem{theorem}{Theorem}
\newtheorem{lemma}{Lemma}
\newtheorem{corollary}{Corollary}
\newtheorem{proposition}{Proposition}
\newtheorem{conjecture}{Conjecture}
\newtheorem{remark}{Remark}
\newtheorem{definition}{Definition}
\newtheorem{convention}{Convention}
\begin{document}
\title[Various remarks about Casas-Alvero conjecture]
{Various remarks about Casas-Alvero conjecture}

\author[R. Laterveer, M. Ouna\"{\i}es]
{R. Laterveer \& M. Ounaies}

\address{Institut de Recherche Math\'ematique Avanc\'ee, Universit\'e 
Louis Pasteur 7 Rue Ren\'e Des\-car\-tes, 67084 Strasbourg CEDEX, France.}
\email{robert.laterveer@math.unistra.fr}

\address{Institut de Recherche Math\'ematique Avanc\'ee, Universit\'e 
de Strasbourg 7 Rue Ren\'e Des\-car\-tes, 67084 Strasbourg CEDEX, France.}
\email{myriam.ounaies@math.unistra.fr}

\date{\today}

\keywords{Casas-Alvero conjecture, polynomials}

\subjclass{30E05, 42A85}

\title{Constraints on hypothetical counterexamples to the Casas-Alvero conjecture}
\maketitle

In \cite{CA}, Casas-Alvero formulated the following :
\begin{conjecture}
Let $f\in \C \lbrack X \rbrack$ be a polynomial of degree $N\ge 1$. Assume that $f$ has a common root with each of its derivatives  $f^{(1)}, \cdots, f^{(N-1)}$ (we will say that $f$ is CA). Then $f$ is of the form $f(z)=a(z-b)^N$, $a,b\in \C$.
\end{conjecture}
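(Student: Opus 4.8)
The plan is to place the problem in an algebro-geometric framework and attack it by reduction modulo primes, which is the only approach presently known to yield unconditional results. I should emphasize at the outset that the conjecture is open in general, so a realistic target is to settle it for structured degrees and to constrain the possible counterexamples otherwise.

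First I would normalize. Replacing $f(z)$ by $a^{-1}f(z+c)$ for suitable $a,c\in\C$, one may assume $f$ is monic and, say, that $f^{(N-1)}$ vanishes at the origin, so the claim reduces to: a monic CA polynomial of degree $N$ equals $z^N$. Writing $f(z)=z^N+a_{N-1}z^{N-1}+\cdots+a_0$, the CA hypothesis is that for each $i\in\{1,\dots,N-1\}$ the resultant $\operatorname{Res}(f,f^{(i)})$ vanishes. Each such resultant is an integer polynomial in $a_0,\dots,a_{N-1}$, so the monic CA polynomials of degree $N$ form a closed subscheme $V_N\subset\mathbb{A}^{N}_{\mathbb{Z}}$ defined over $\mathbb{Z}$, and the goal is to prove $V_N(\C)=\{z^N\}$. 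A B\'ezout-type count shows $V_N$ is zero-dimensional, so it suffices to control its length and show it is concentrated at the single point $z^N$.

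For a prime power $N=p^k$ I would reduce modulo $p$. The mechanism is that over $\overline{\mathbb{F}_p}$ the derivative operator is degenerate: many of the coefficients $\binom{j}{i}$ appearing in $f^{(i)}$ vanish modulo $p$, which collapses the reduced CA conditions and, for $N=p^k$, pins $V_N\otimes\overline{\mathbb{F}_p}$ to the single point $z^N$ with the expected multiplicity; more generally a reduction for $N=p^k n$ relates the mod-$p$ scheme to the one in degree $n$. A flatness and semicontinuity argument then transports this back to characteristic $0$, giving $V_N(\C)=\{z^N\}$. This scheme-theoretic comparison, applied with a prime dividing $N$, is what underlies the known cases $N\in\{p^k,2p^k,3p^k,\dots\}$.

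The main obstacle is the case where $N$ has several distinct prime factors. Then no single reduction modulo $p$ annihilates the relevant binomial coefficients simultaneously, the collapse of the resultant conditions fails, and one can no longer certify that $V_N\otimes\overline{\mathbb{F}_p}$ is supported at $z^N$; indeed the mod-$p$ fiber may acquire spurious points, breaking the length comparison. This is precisely why a full proof is out of reach, and why the sensible program --- which I expect is carried out below --- is to extract constraints rather than a resolution: to bound the number and multiplicities of the distinct roots of a hypothetical counterexample, to exclude large classes of degrees $N$, and to derive analytic inequalities that the coefficients of any counterexample would be forced to satisfy.
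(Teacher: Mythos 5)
This statement is the Casas-Alvero conjecture itself, which is open; the paper contains no proof of it, and your proposal is not one either --- you say so yourself in the final paragraph. So the verdict has to be that there is a genuine gap, and the gap is the entire general case: your reduction-mod-$p$ mechanism (which is indeed the argument of Graf von Bothmer--Labs--Schicho--van de Woestijne, as reformulated by Draisma--de Jong) only collapses the CA conditions when $N$ is a prime power (or a small multiple of one), precisely because one needs a single prime $p$ to kill all the relevant binomial coefficients at once. You identify this obstacle correctly, but identifying an obstacle is not overcoming it. One further technical point: your claim that ``a B\'ezout-type count shows $V_N$ is zero-dimensional'' is unjustified. B\'ezout bounds the degree of a zero-dimensional intersection; it does not establish zero-dimensionality, and finiteness of the (normalized) scheme of CA polynomials is not known unconditionally --- in the actual prime-power argument, zero-dimensionality of the generic fiber is deduced \emph{a posteriori} from finiteness of the special fiber via upper semicontinuity of fiber dimension, not the other way around.

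For comparison: the paper does exactly what your last paragraph predicts a sensible program would do, but by quite different, more elementary means. It works with Gauss--Lucas, Rolle, Newton's identities and explicit $p$-adic valuation computations on the coefficients (rather than with the scheme $V_N$ and flatness), and it proves constraints on hypothetical counterexamples: any non-trivial CA polynomial has at least five distinct roots (Propositions \ref{3roots} and \ref{4roots}); in degree $N=p+1$ the center of mass is a simple root at which at least two intermediate derivatives besides $f^{(N-1)}$ vanish (Proposition \ref{2fois}); there is no counterexample of degree $p+1$ with all roots rational (Proposition \ref{rational}); and specific derivative pairs of a degree-$12$ counterexample cannot share roots (Proposition \ref{12}). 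None of this amounts to a proof of the conjecture, and neither does your proposal.
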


The conjecture is known to be true in low degrees \cite{Diaz}. The first major break-through in the subject was \cite{Graf}, where the conjecture is proven for all $N$ that are powers of a prime number. A very nice overview of all that is currently known concerning this conjecture, as well as an alternative proof of the result of \cite{Graf}, can be found in \cite{Dr-Jo}. 

This note is the fruit of a failed attempt to prove the conjecture for $N=12$ (this is the lowest degree for which the conjecture has not been settled as yet). 

What follows is a short description of the contents of this note.

In the first section, which is largely expository, techniques are elementary. The degree $N$ is arbitrary, 
and we ask ourselves under what extra conditions on the number of roots the conjecture can be proven: 
The main new result in this first section is proposition \ref{4roots}: the conjecture is true if $f$ has at most 4 distinct roots.

In the second section we restrict attention to the case $N=p^r+1$ (where $p$ is a prime), and then further specialize to the case $N=p+1$. We use the $p$-adic valuation, inspired by Draisma and de Jong's take (\cite{Dr-Jo}, \cite{Jo}) on the result of Graf von Bothmer et alii \cite{Graf}. The main results of the second section are: If $f$ is a CA-polynomial of degree $N=p+1$, and $c$ is the root of $f^{(N-1)}$, then there are at least 2 other indices $2\le l_1<l_2<N-1$ such that $f^{(l_1)}(c)=f^{(l_2)}(c)=0$ (this is Proposition \ref{2fois}). Also, we are able to prove the conjecture for degree $p+1$ polynomials all of whose roots are rational (Proposition \ref{rational}).

In the third and last section, we present a short remark that gives some additional constraints on hypothetical counterexamples to the conjecture in degree 12.

We hope this note may raise interest in the subject, and perhaps spark further progress towards proving the conjecture...

\begin{convention}
In what follows, we will call a polynomial non-trivial if it has at least two distinct roots. The conjecture says that there exists no non-trivial CA-polynomial.
\end{convention}

From the definition of a CA-polynomial, it is easily seen that:
\begin{lemma}\label{change}
Let $f$ be a monic CA-polynomial of degree $N\ge 1$ and  $\alpha\in \C^*$ and $\beta\in \C$. The polynomial $g(z)=\alpha^{-N}f(\alpha z+\beta)$ is also a monic CA-polynomial of degree $N$.
\end{lemma}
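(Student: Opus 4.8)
The plan is to verify directly the two defining features of a monic CA-polynomial for $g$: that $g$ is monic of degree $N$, and that $g$ shares a root with each of its derivatives $g^{(1)},\dots,g^{(N-1)}$. The mechanism making this work is that precomposition with the affine bijection $\phi(z)=\alpha z+\beta$ of $\C$ transports roots of $f$ and its derivatives to roots of $g$ and the corresponding derivatives, the nonvanishing scalar factors serving only to fix the monic normalization.

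First I would settle the degree and leading coefficient. Since $f$ is monic of degree $N$, its top term is $z^N$, so $f(\alpha z+\beta)$ has top term $\alpha^N z^N$; multiplying by $\alpha^{-N}$ (which is legitimate because $\alpha\in\C^*$) shows that $g(z)=z^N+\cdots$ is monic of degree $N$. The key computation is then the effect of the chain rule on the derivatives: an easy induction on $k$, each step contributing one extra factor $\alpha$ from the inner function, yields
\[
g^{(k)}(z)=\alpha^{k-N}\,f^{(k)}(\alpha z+\beta),\qquad 0\le k\le N.
\]

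Finally I would invoke the CA-hypothesis on $f$. Fix $k$ with $1\le k\le N-1$; by assumption there is some $a\in\C$ with $f(a)=f^{(k)}(a)=0$. Setting $z_0=\alpha^{-1}(a-\beta)=\phi^{-1}(a)$, so that $\alpha z_0+\beta=a$, and evaluating the displayed formula at $z_0$ gives $g(z_0)=\alpha^{-N}f(a)=0$ and $g^{(k)}(z_0)=\alpha^{k-N}f^{(k)}(a)=0$. Since $\alpha\neq 0$, the scalar prefactors cannot cancel these zeros, so $z_0$ is a common root of $g$ and $g^{(k)}$. Letting $k$ range over $1,\dots,N-1$ shows $g$ is CA, which completes the argument.

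I expect no genuine obstacle here: the statement is a bookkeeping consequence of the chain rule together with the fact that an invertible affine substitution is a bijection of $\C$. The only point requiring a little care is to track the powers of $\alpha$ and to use $\alpha\neq 0$ in two places — once so that $\alpha^{-N}$ is defined and $g$ is genuinely monic of degree $N$, and once so that multiplying $f^{(k)}(a)$ by $\alpha^{k-N}$ preserves, rather than destroys, the vanishing.
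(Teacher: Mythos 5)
Your argument is correct and is exactly the standard verification the authors have in mind: the paper states this lemma without proof (``From the definition of a CA-polynomial, it is easily seen that...''), and your chain-rule computation $g^{(k)}(z)=\alpha^{k-N}f^{(k)}(\alpha z+\beta)$ together with pulling back a common root $a$ to $\alpha^{-1}(a-\beta)$ is the intended bookkeeping. Nothing is missing.
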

In consequence, whenever convenient, we may choose any root of $f$ and assume it to be equal to $0$. When we have two distinct roots of $f$, we may assume the first to be equal to $0$ and the second to be equal to $1$. 

\section{Constraints on the number of distinct roots}

The following result, proven by Draisma and de Jong, conveniently subsumes many ``easy'' cases where the conjecture can be proven:

\begin{proposition}\cite[Proposition 6]{Dr-Jo}\label{Compl2}
Let $f$ be a CA-polynomial of degree $N\ge 1$. Let $\alpha_i$, $i=1,\cdots, N-1$ be a common root of $f$ and $f^{(i)}$. Then the cardinality of $\{\alpha_1,\cdots,\alpha_{N-1}\}$ cannot be two.
\end{proposition}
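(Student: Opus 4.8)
The plan is to argue by contradiction via a simple counting of multiplicities. Suppose the set $\{\alpha_1,\cdots,\alpha_{N-1}\}$ has cardinality exactly two. Using Lemma \ref{change}, I would normalize so that these two common roots are $0$ and $1$. Since each $\alpha_i$ is by hypothesis a root of $f$ itself, both $0$ and $1$ are roots of $f$; write $m_0$ and $m_1$ for their respective multiplicities, so that $m_0\ge 1$, $m_1\ge 1$, and $m_0+m_1\le N$ because the total multiplicity cannot exceed the degree.

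Next I would partition the index set according to which of the two values is chosen. Let $A=\{\,i\in\{1,\cdots,N-1\}:\alpha_i=0\,\}$ and $B=\{\,i:\alpha_i=1\,\}$. Since every $\alpha_i$ equals $0$ or $1$ and both values actually occur, $A$ and $B$ are disjoint, nonempty, and satisfy $A\cup B=\{1,\cdots,N-1\}$, whence $|A|+|B|=N-1$. The defining property of a common root gives $f^{(i)}(0)=0$ for all $i\in A$ and $f^{(i)}(1)=0$ for all $i\in B$.

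The key observation is the standard correspondence between vanishing of derivatives and multiplicity: a derivative $f^{(i)}$ vanishes at a root of multiplicity $m$ precisely when $i\le m-1$. Applying this at $0$ forces every $i\in A$ to satisfy $i\le m_0-1$, so in particular $|A|\le m_0-1$; applying it at $1$ gives $|B|\le m_1-1$. Combining these estimates yields
\[
N-1=|A|+|B|\le (m_0-1)+(m_1-1)=m_0+m_1-2\le N-2,
\]
which is absurd. Hence the cardinality cannot be two.

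The argument is short, and the only point requiring care is the multiplicity-vanishing correspondence that converts the vanishing conditions $f^{(i)}(0)=0$ and $f^{(i)}(1)=0$ into the bounds $|A|\le m_0-1$ and $|B|\le m_1-1$; once those bounds are in hand, the counting inequality together with $m_0+m_1\le N$ closes the argument immediately. I expect the main (very mild) obstacle to be simply ensuring that the bookkeeping is tight, i.e.\ that the indices in $A$ and $B$ really are constrained by the multiplicities rather than merely bounded by $N-1$.
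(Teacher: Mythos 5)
Your argument has a genuine gap, and it sits exactly where you flagged ``the only point requiring care'': the multiplicity--vanishing correspondence is not an equivalence. If $\zeta$ is a root of $f$ of multiplicity $m$, then indeed $f^{(i)}(\zeta)=0$ for all $i\le m-1$, but the converse implication ``$f^{(i)}(\zeta)=0\Rightarrow i\le m-1$'' is false: a higher derivative may vanish at $\zeta$ accidentally. For instance $f(z)=z^3-3z$ has $0$ as a simple root ($m=1$), yet $f''(z)=6z$ vanishes at $0$, so $i=2$ would lie in your set $A$ while $m_0-1=0$. Consequently the bounds $|A|\le m_0-1$ and $|B|\le m_1-1$ are unjustified; what is true is only that every $i\le m_0-1$ lies in $A$, which is the wrong direction for your counting. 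Such accidental vanishings are precisely the phenomenon the Casas-Alvero conjecture is about, so they cannot be ruled out for free.

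A quick sanity check shows the step cannot be repaired as stated: if your correspondence were valid, the identical count applied to a common-root set $\{\beta_1,\dots,\beta_k\}$ with multiplicities $m_1,\dots,m_k$ would give $N-1\le\sum_j(m_j-1)\le N-k$, hence $k=1$; then $f,f',\dots,f^{(N-1)}$ would all vanish at $\beta_1$, forcing $f=a(z-\beta_1)^N$ and proving the full conjecture in three lines --- which it certainly does not. Note also that the paper does not prove this proposition itself: it is quoted from Draisma and de Jong \cite[Proposition 6]{Dr-Jo}, whose argument is genuinely more delicate. The paper does sketch two softer routes (the barycenter remark following Lemma \ref{Relations}, and Corollary \ref{sudbury} via Gauss--Lucas) to the weaker conclusion that a non-trivial CA-polynomial has at least three distinct roots, and both of those must work around exactly the failure of the converse that your proof assumes.
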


\begin{remark}
As immediate corollaries of Proposition  \ref{Compl2}, we get the following three special cases:
\item[1.] If $f$ is CA and $f''(z)=N(N-1)(z-a)^{N-2}$, then $f(z)=(z-a)^N$.
\item[2.] Let $f$ be a monic CA-polynomial of degree $N$, and suppose that $f$ has a root $a$ of multiplicity at least $N-1$. Then $f(z)=(z-a)^N$. 
\item[3.] Let $f$ be a non-trivial CA-polynomial, then $f$ has at least three distinct roots.

We note that case 1 was also dealt with in \cite[Proposition 2.2]{Verh}.  
\end{remark}

In the next two propositions, we will go a step further in directions 1 and  2. Later on (Propositions \ref{3roots} and \ref{4roots}), we will go two steps further in direction 3.

\begin{proposition}
Let $f$ be a CA-polynomial of degree $N\ge 4$. Assume that for some $a\in \C$, $f^{(3)}(z)=\frac{N!}{(N-3)!}(z-a)^{N-3}$. Then $f(z)=(z-a)^N$.
\end{proposition}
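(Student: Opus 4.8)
The plan is to reduce to an explicit low-parameter family and then extract a numerical constraint on $N$ that cannot hold. First, by Lemma~\ref{change} I would translate so that $a=0$; comparing leading coefficients in the hypothesis shows $f$ is monic, and integrating $f^{(3)}(z)=\frac{N!}{(N-3)!}z^{N-3}$ three times gives $f(z)=z^N+Az^2+Bz+C$ for suitable constants $A,B,C$. For every $k$ with $3\le k\le N-1$ the derivative $f^{(k)}(z)=\frac{N!}{(N-k)!}z^{N-k}$ has $0$ as its only root, so the common root of $f$ and $f^{(k)}$ must be $0$; hence $f(0)=0$ and $C=0$, leaving $f(z)=z^N+Az^2+Bz$.

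Next I would split on whether $A=0$. If $A=0$ then $f''(z)=N(N-1)z^{N-2}$ is a pure power, and the case already recorded in the Remark following Proposition~\ref{Compl2} (a CA-polynomial with $f''$ a pure power equals $(z-a)^N$) gives $f(z)=z^N$. So assume $A\neq 0$. Then $f''(0)=2A\neq 0$, so the common root $\alpha_2$ of $f$ and $f''$ satisfies $\alpha_2\neq 0$. Writing $\alpha_i$ for a common root of $f$ and $f^{(i)}$, the set $\{\alpha_1,\dots,\alpha_{N-1}\}$ contains the two distinct values $0$ and $\alpha_2$; by Proposition~\ref{Compl2} its cardinality cannot be $2$, which forces $\alpha_1\notin\{0,\alpha_2\}$, in particular $\alpha_1\neq 0$.

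The heart of the argument is then a short computation. Since $\alpha_1\neq 0$ and $\alpha_2\neq 0$, dividing $f(\alpha_1)=0$ and $f(\alpha_2)=0$ by $\alpha_1$ and $\alpha_2$ gives $\alpha_i^{N-1}+A\alpha_i+B=0$ for $i=1,2$. Combining $\alpha_1^{N-1}+A\alpha_1+B=0$ with $f'(\alpha_1)=N\alpha_1^{N-1}+2A\alpha_1+B=0$ eliminates the top-degree term, and yields both a linear relation expressing $\alpha_1$ through $A,B$ and the identity $\alpha_1^{N-2}=-\frac{A}{N-1}$. Likewise $f''(\alpha_2)=0$ gives $\alpha_2^{N-2}=-\frac{2A}{N(N-1)}$, and combining with $\alpha_2^{N-1}+A\alpha_2+B=0$ expresses $\alpha_2$ through $A,B$. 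Matching the two expressions for $B$ shows $\alpha_2=\frac{N}{N+1}\alpha_1$, and dividing the two relations $\alpha_1^{N-2}=-\frac{A}{N-1}$ and $\alpha_2^{N-2}=-\frac{2A}{N(N-1)}$ collapses everything to the single equation $\bigl(\frac{N}{N+1}\bigr)^{N-2}=\frac{2}{N}$.

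The main obstacle is the final step: ruling out this equation for every integer $N\ge 4$. Rewriting it as $N^{N-1}=2(N+1)^{N-2}$, I would prove the strict inequality $N^{N-1}>2(N+1)^{N-2}$, that is $N\bigl(\frac{N}{N+1}\bigr)^{N-2}>2$, for all $N\ge 4$; this follows from checking the first few values together with the monotonicity and asymptotics of $N\bigl(\frac{N}{N+1}\bigr)^{N-2}$, whose factor tends to $1/e$ so that the whole expression tends to $+\infty$. This contradiction forces $A=0$, putting us back in the solved case, so $f(z)=z^N$, i.e. $f(z)=(z-a)^N$ in the original coordinates. I expect this elementary numerical inequality, rather than any conceptual point, to be the only place requiring genuine care.
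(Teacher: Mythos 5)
Your proposal is correct and follows essentially the same route as the paper: after reducing to $f(z)=z^N+Az^2+Bz$, both arguments extract from the common roots of $f,f'$ and of $f,f''$ the same relation $\bigl(\tfrac{N}{N+1}\bigr)^{N-2}=\tfrac{2}{N}$ and kill it by the same elementary monotonicity estimate, the only (immaterial) difference being which degenerate case is split off first ($A=0$ handled via Proposition \ref{Compl2} in your version, versus $f'(0)=0$ handled by a one-line computation in the paper).
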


\begin{proof}
Thanks to Lemma \ref{change}, we may choose $a=0$. 

Assume that $f'(0)\not=0$, then $f$ has a root of multiplicity at least $2$ which is different from $0$ and again by Lemma\ref{change}, we may assume $f(1)=f'(1)=0$.
Thus
\[f(z)=z^N-(N-1)z^2+(N-2)z;\ \ f''(z)=(N-1)\left(Nz^{N-2}-2\right).\]
Solving $f(z)=f''(z)=0$, we get $z=\frac{N}{N+1}$ and  $(\frac{N+1}{N})^{N-2}=\frac{N}{2}$. We easily see that the function $\phi(t)=(t-2)\ln\frac{t+1}{t}-\ln\frac{t}{2}$ is strictly decreasing for $t\ge 4$ and that $\phi(4)<0$. Thus the equality $\phi(N)=0$ is never reached for $N\ge 4$. 
 
 We conclude that we necessarily have  $f'(0)=0$. Then, for some constant $c$, $f''(z)=N(N-1)z^{N-2}+2c$ and $f(z)=z^N+cz^2$. Solving $f(z)=f''(z)=0$, we get that $c=0$.
\end{proof}

\begin{proposition}\label{N-2}
Let $f$ be a monic CA-polynomial of degree $N\ge 3$, and suppose $f$ has a root $a$ of multiplicity at least $N-2$. Then $f(z)=(z-a)^N$.
\end{proposition}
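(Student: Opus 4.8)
The plan is to normalize via Lemma~\ref{change} so that $a=0$, making $0$ a root of $f$ of multiplicity at least $N-2$. If this multiplicity is $N$ we are done, and if it is $N-1$ we are done by the second consequence of Proposition~\ref{Compl2} recorded in the Remark. So I would assume the multiplicity is exactly $N-2$ and write
\[ f(z)=z^{N-2}(z-\beta)(z-\gamma)=z^N+pz^{N-1}+qz^{N-2}, \]
with $p=-(\beta+\gamma)$, $q=\beta\gamma$ and $q\neq 0$. By the third consequence of Proposition~\ref{Compl2} (a non-trivial CA-polynomial has at least three distinct roots), the numbers $0,\beta,\gamma$ must be pairwise distinct; in particular $\beta\neq\gamma$ and both are nonzero.

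The key observation is that almost every CA-condition is automatic: since $0$ has multiplicity $N-2$, we have $f^{(i)}(0)=0$ for all $i\le N-3$, so $f$ and $f^{(i)}$ already share the root $0$. Only the two top derivatives impose genuine constraints. I would compute
\[ f^{(N-1)}(z)=N!\,z+p\,(N-1)!, \qquad f^{(N-2)}(z)=\tfrac{N!}{2}z^2+p\,(N-1)!\,z+q\,(N-2)!. \]
Since $f^{(N-2)}(0)=q\,(N-2)!\neq 0$, the quadratic $f^{(N-2)}$ must vanish at $\beta$ or at $\gamma$, and the unique root $-p/N$ of $f^{(N-1)}$ must lie in $\{0,\beta,\gamma\}$.

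Next I would use $\beta^2=-p\beta-q$ to simplify $f^{(N-2)}(\beta)=0$. A direct substitution, after cancelling the nonzero factor $N-2$, reduces it to the clean relation
\[ (N-1)\,p\beta+(N+1)\,q=0, \]
which I will call $(*)$, and similarly for $\gamma$. Writing $(*)$ for both roots and subtracting gives $(N-1)p(\beta-\gamma)=0$, which forces $p=0$ and hence $q=0$; so $f^{(N-2)}$ cannot vanish at both $\beta$ and $\gamma$, and after relabeling I may assume $(*)$ holds for $\beta$. Since $(*)$ with $p=0$ would again force $q=0$, we get $p\neq 0$, so $-p/N\neq 0$ and the root of $f^{(N-1)}$ is $\beta$ or $\gamma$.

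The endgame is a short two-case analysis using the Vieta relations together with $(*)$. If $-p/N=\beta$, then $\gamma=(N-1)\beta$ and $q=(N-1)\beta^2$, whereupon $(*)$ collapses to $(N-1)\beta^2=0$, impossible for $N\ge 3$ and $\beta\neq 0$. If instead $-p/N=\gamma$, then $\beta=(N-1)\gamma$ and $(*)$ collapses to $(N-1)\gamma^2\,(-N^2+2N+1)=0$, whose only nontrivial factor $-N^2+2N+1$ has no integer root. Both cases are contradictory, which proves the proposition. I expect the only delicate point to be the bookkeeping in this final step — tracking which root of $f$ serves as the common root of $f^{(N-1)}$ as opposed to $f^{(N-2)}$ — rather than any single hard computation, since once $(*)$ is in hand every reduction is elementary.
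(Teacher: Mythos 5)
Your proof is correct and follows essentially the same route as the paper: both arguments observe that only $f^{(N-1)}$ and $f^{(N-2)}$ impose nontrivial conditions once $0$ has multiplicity $N-2$, and then derive a contradiction by elementary algebra on the coefficients (the paper merely normalizes the common root of $f$ and $f^{(N-1)}$ to $1$ via Lemma~\ref{change} instead of keeping $\beta,\gamma$ symbolic, arriving at the impossible equation $(N+1)^2=2N^2$ where you arrive at $-N^2+2N+1=0$). All of your computations, including the key relation $(N-1)p\beta+(N+1)q=0$ and the two final case contradictions, check out.
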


\begin{proof}
Let $a=0$. If $f^{(N-1)}(0)\not=0$, then we may assume that  $f(1)=f^{(N-1)}(1)=0$. Hence 
\[f(z)=z^{N-2}(z^2-Nz+N-1),\ \ f^{(N-2)}(z)=\frac{(N-1)!}{2}(Nz^2-2Nz+2).\]
Solving $f(z)=f^{(N-2)}(z)=0$, we get $z^2=2$ and $z=\frac{N+1}{N}$. Thus $(N+1)^2=2N^2$ which is impossible.

 We conclude that we necessarily have  $f^{(N-1)}(0)=0$. Then, for some constant $c$, $f(z)=z^N+cz^{N-2}$ and $f^{(N-2)}(z)=\frac{N!}{2}z^2+c$. Solving $f(z)=f^{(N-2)}(z)=0$, we get $c=0$.
\end{proof}

\begin{remark}
We have chosen to present an elementary proof of Proposition \ref{N-2}, though we also can see it as direct consequence of the forthcoming Proposition \ref{3roots}.
\end{remark} 

Now, before going further, let us recall some basic properties of the elementary symmetric polynomials.
  Let a polynomial $f$ and its derivatives be of the form
\[f^{(l)}(z)=\frac{N!}{N-l!}  (a_0 z^{N-l}+{{N-l} \choose 1} a_1z^{N-l-1}+{{N-l} \choose 2} a_2z^{N-l-2}+\cdots+
a_{N-l}).\]
(Here by convention $f=f^{(0)}$).
Let $\sigma_m(l)$ be the sum of the $m$th powers of the roots of $f^{(l)}$, for $l=0,\cdots,N-1$. Then Newton formulas applied to each $f^{(l)}$ give the following relations (See for example \cite{Pr} for more details on Newton formulas):

\begin{lemma}\label{Relations}
\[\sum_{k=1}^j\sigma_k(l) {{N-l} \choose {j-k}}a_{j-k}=-j{{N-l} \choose j}a_j\]
for $0\le l\le N-1$, $1\le j\le  N-l$.

\end{lemma}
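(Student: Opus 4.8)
The plan is to derive these relations by applying the classical Newton formulas to each derivative $f^{(l)}$ separately, treating $l$ as fixed and letting $j$ range over $1,\dots,N-l$. The one thing to watch throughout is the normalization: the roots of $f^{(l)}$ are unaffected either by the overall scalar $\frac{N!}{(N-l)!}$ or by the leading coefficient $a_0$, so all the symmetric-function data can be read off the monic polynomial proportional to $f^{(l)}$.

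First I would fix $l$ with $0\le l\le N-1$, set $n:=N-l\ge 1$, and divide the bracketed expression for $f^{(l)}$ by $a_0$ to obtain the monic polynomial
\[ g_l(z)=z^{n}+\binom{n}{1}\frac{a_1}{a_0}z^{n-1}+\binom{n}{2}\frac{a_2}{a_0}z^{n-2}+\cdots+\frac{a_n}{a_0}, \]
which has exactly the same roots as $f^{(l)}$, hence the same power sums $\sigma_1(l),\dots,\sigma_n(l)$. The key observation is that the coefficient of $z^{n-j}$ in $g_l$, namely $b_j:=\binom{n}{j}a_j/a_0$, equals $(-1)^{j}$ times the $j$-th elementary symmetric function of the roots; working with these signed coefficients $b_j$ rather than with the elementary symmetric functions themselves is precisely what makes all signs disappear from the final statement.

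Next I would invoke Newton's identities in the form that, for a monic polynomial $z^{n}+b_1 z^{n-1}+\cdots+b_n=\prod_i(z-r_i)$ with power sums $p_k=\sum_i r_i^{k}$, one has $\sum_{k=1}^{j}b_{j-k}\,p_k=-j\,b_j$ for $1\le j\le n$ (with $b_0=1$). This is cleanest to obtain from the logarithmic-derivative identity $g_l'(z)/g_l(z)=\sum_{k\ge 0}p_k z^{-k-1}$ by clearing the denominator and matching the coefficient of $z^{n-j-1}$ on both sides, the $b_j p_0=n\,b_j$ contribution being what produces the $-j\,b_j$ on the right; I would either reproduce this one-line generating-function argument or simply cite it. Substituting $b_{j-k}=\binom{n}{j-k}a_{j-k}/a_0$, $b_j=\binom{n}{j}a_j/a_0$ and $p_k=\sigma_k(l)$ into this recursion and multiplying through by $a_0$ clears all denominators and yields exactly
\[ \sum_{k=1}^{j}\sigma_k(l)\binom{N-l}{j-k}a_{j-k}=-j\binom{N-l}{j}a_j, \]
as claimed.

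There is no serious obstacle here: the lemma is Newton's formulas repackaged for the given coefficient convention. The only genuinely error-prone step is the bookkeeping of the previous paragraph, where one must confirm that the overall factor $\frac{N!}{(N-l)!}$ and the leading coefficient $a_0$ cancel cleanly, that the $k=j$ term contributes $\sigma_j(l)\,a_0$ via $\binom{N-l}{0}a_0$ (which is why the correct normalization has to be chosen before invoking Newton), and that no stray sign survives because the $b_j$ already absorb the factor $(-1)^{j}$. Once the identification of $b_j$ with the polynomial coefficients is made correctly, the relation follows simultaneously for every $l$ in $\{0,\dots,N-1\}$ and every $j$ in $\{1,\dots,N-l\}$.
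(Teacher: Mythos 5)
Your proof is correct and follows essentially the same route as the paper, which simply applies the classical Newton identities to each derivative $f^{(l)}$ (citing Prasolov) without writing out the details. Your normalization bookkeeping — dividing out $\frac{N!}{(N-l)!}$ and $a_0$ to pass to the monic polynomial, identifying $b_j=\binom{N-l}{j}a_j/a_0$, and clearing $a_0$ at the end — is exactly what the paper leaves implicit, and it checks out.
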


\begin{remark}  In particular, for $j=1$, we have that
\[\displaystyle \frac{\sigma_1(l)}{N-l}=\frac{\sigma_1(0)}{N}\]
 for  $l=0,\cdots,N-1$, which means that the center of mass of the roots of the derivatives is fixed. 
As obviously 
\[\displaystyle \sigma_1(N-1)=\frac{\sigma_1(0)}{N}=a_1\] is the only root of $f^{(N-1)}$, we see that whenever $f$ is a Casas-Alvero polynomial, the center of mass of its roots $\displaystyle \frac{\sigma_1(0)}{N}$ is itself a root of $f$. As a direct consequence,  the number of distinct roots of a Casas-Alvero polynomial cannot be two. Actually, we can say more: if $f$ has more than two distinct roots, then at least one of them (the center of mass) has to be in the interior of the convex hull of the roots. 
\end{remark}

In order to go further in the study of this convex hull, let us first recall the well-known Gauss-Lucas theorem:

\begin{theorem}[Gauss-Lucas]\label{Gauss-Luca} Let $f\in \C \lbrack X \rbrack $.
Let $f$ be a complex polynomial, $f'$ be its derivative and $\Gamma$ be the convex hull of the roots of $f$. Then each root of $f'$ is either a multiple root of $f$, or belongs to the interior of $\Gamma$. 
\end{theorem}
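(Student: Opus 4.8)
The plan is to prove the inclusion via the logarithmic derivative of $f$, which converts the vanishing of $f'$ into a statement about a convex combination of the roots. Writing $f(z)=c\prod_{k=1}^{n}(z-z_k)$ with the $z_k$ the roots of $f$ listed with multiplicity, logarithmic differentiation gives
\[
\frac{f'(z)}{f(z)}=\sum_{k=1}^{n}\frac{1}{z-z_k}
\]
away from the roots of $f$. Let $w$ be any root of $f'$. If $f(w)=0$, then $w$ is a common root of $f$ and $f'$, hence a multiple root of $f$, and we are in the first alternative. So I may assume $f(w)\neq 0$, in which case the displayed identity forces $\sum_{k}(w-z_k)^{-1}=0$.

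The next step is the standard conjugation trick to extract positive real weights. Multiplying each summand by $\overline{w-z_k}/\overline{w-z_k}$ and taking the complex conjugate of the resulting (vanishing) sum yields
\[
0=\sum_{k=1}^{n}\frac{w-z_k}{\lvert w-z_k\rvert^{2}}.
\]
Setting $\lambda_k=\lvert w-z_k\rvert^{-2}>0$ and solving for $w$ gives
\[
w=\frac{\sum_{k}\lambda_k z_k}{\sum_{k}\lambda_k},
\]
which exhibits $w$ as a convex combination of the roots $z_k$ with strictly positive coefficients summing to $1$. In particular $w\in\Gamma$, which already recovers the classical Gauss-Lucas inclusion.

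It remains to upgrade membership in $\Gamma$ to membership in the interior, and this is the step that needs the most care. The key feature is that the weights $\lambda_k/\sum_j\lambda_j$ are \emph{all} strictly positive. I would argue by contradiction using a supporting hyperplane: if $w$ lay on the boundary of $\Gamma$, there would be a non-zero real-linear functional $\phi\colon\C\to\RR$ with $\phi(z)\le\phi(w)$ for all $z\in\Gamma$. Since $\phi(w)$ is then a convex combination with strictly positive weights of the values $\phi(z_k)\le\phi(w)$, every $z_k$ would have to satisfy $\phi(z_k)=\phi(w)$; but then all roots, and hence all of $\Gamma$, would lie in the line $\{\phi=\phi(w)\}$, contradicting that $\Gamma$ has non-empty interior. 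Thus $w$ lies in the interior of $\Gamma$ whenever $\Gamma$ is two-dimensional; in the degenerate collinear case one reads ``interior'' as the relative interior of the segment, and the same positivity argument places $w$ strictly between the extreme roots.

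The main obstacle is precisely this last step: the passage from ``$w\in\Gamma$'' to ``$w$ lies in the interior of $\Gamma$'' hinges on the strict positivity of all the weights together with the correct reading of ``interior'' in the possibly degenerate (collinear) situation, whereas the inclusion itself is a routine consequence of the conjugation identity.
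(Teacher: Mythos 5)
The paper does not prove this statement: it is recalled as ``the well-known Gauss--Lucas theorem'' with no argument supplied, so there is no proof of record to compare yours against. Your proof is the standard one --- splitting off the case $f(w)=0$ (common root of $f$ and $f'$, hence multiple root of $f$), and otherwise using the logarithmic derivative plus the conjugation trick to write $w$ as a convex combination of the roots with \emph{strictly} positive weights --- and it is correct. You also correctly isolate the only delicate point in the refined statement the authors actually use, namely the passage from $w\in\Gamma$ to $w$ in the \emph{interior}: your supporting-functional argument settles the two-dimensional case, and your remark that in the collinear case ``interior'' must be read as relative interior is exactly the convention the paper tacitly adopts later (e.g.\ in the proofs of Propositions \ref{3roots} and \ref{4roots}, where real roots strictly between the extreme roots are counted as lying in the ``open Gauss--Lucas hull'' and Corollary \ref{sudbury} is applied only to the extreme roots). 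So your write-up supplies a correct proof of a fact the paper merely cites, and even clarifies a small imprecision in its statement.
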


We will call the convex hull $\Gamma$ of the roots of $f$ the {\it Gaus-Lucas hull} of $f$, the interior of $\Gamma$  the {\it open Gauss-Lucas hull} and the boundary of $\Gamma$ the {\it Gauss-Lucas polygon}.

\begin{corollary}\label{sudbury}\cite{Su}
Let $f$ be a polynomial of degree $N$.
Let $\zeta$ be a root of $f$ located on its Gauss-Lucas polygon, with multiplicity $m\le N-1$.
Then, for all $m\le k\le N-1$, $f^{(k)}(\zeta)\not=0$.
\end{corollary}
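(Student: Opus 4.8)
The plan is to reduce everything to a statement about how the roots migrate under differentiation near a point of the boundary, and then to run a short induction on the order of the derivative.

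First I would normalise. By an affine change of variable $z\mapsto \alpha z+\beta$ (as in Lemma \ref{change}; since $\tilde f(z)=f(\alpha z+\beta)$ satisfies $\tilde f^{(k)}(z)=\alpha^k f^{(k)}(\alpha z+\beta)$, such a change preserves both the multiplicity of a root and the property of lying on the Gauss-Lucas polygon), I may assume $\zeta=0$. Since $0$ lies on the relative boundary of the convex hull $\Gamma$ of the roots, and $m\le N-1$ forces $\Gamma\ne\{0\}$, the point $0$ is either a vertex of $\Gamma$, an endpoint of $\Gamma$ when $\Gamma$ is a segment, or an interior point of an edge of a two-dimensional $\Gamma$. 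In every case there is a supporting line $L$ of $\Gamma$ through $0$ with $\Gamma\not\subseteq L$: for an exposed point (vertex or segment endpoint) one takes a supporting line meeting $\Gamma$ only at $0$, and for an edge-interior point one takes the edge line, which cannot contain the two-dimensional $\Gamma$. Rotating and scaling, I arrange $L=\{\operatorname{Re}z=0\}$ and $\Gamma\subseteq\{\operatorname{Re}z\ge 0\}$. Writing $f(z)=z^m g(z)$ with $g(0)\ne 0$, one computes $f^{(k)}(0)=\frac{k!}{(k-m)!}g^{(k-m)}(0)$ for $k\ge m$; in particular the case $k=m$ is immediate, and it remains to show $f^{(k)}(0)\ne 0$ for $m<k\le N-1$.

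The key ingredient I would isolate is a strict, half-plane version of Gauss-Lucas: if $P$ is nonconstant with all roots in $\{\operatorname{Re}z\ge 0\}$ and \emph{not} all of them on $L$, then every root of $P'$ lying on $L$ is a multiple root of $P$. To prove this, let $\xi\in L$ be a root of $P'$. By Theorem \ref{Gauss-Luca}, $\xi$ is either a multiple root of $P$ or lies in the relative interior of the hull $\Gamma_P$ of the roots of $P$. In the latter case the affine function $\operatorname{Re}$ attains its minimum value $0$ over $\Gamma_P$ at the relative interior point $\xi$, which forces $\operatorname{Re}$ to be constant on $\Gamma_P$, i.e. $\Gamma_P\subseteq L$, contrary to hypothesis. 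Hence $\xi$ is a multiple root of $P$.

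Finally I would run the induction on $k$. The Remark following Lemma \ref{Relations} shows that all the $f^{(k)}$ share the same centre of mass of roots, namely $a_1=\sigma_1(0)/N$; since all roots lie in $\{\operatorname{Re}z\ge 0\}$ and $\Gamma\not\subseteq L$, this centroid has $\operatorname{Re}>0$, so for every $k$ with $N-k\ge 1$ the roots of $f^{(k)}$ are not all on $L$. Now suppose inductively that $f^{(k)}(0)\ne 0$ for some $m\le k\le N-2$. If $f^{(k+1)}(0)=0$, then $0\in L$ is a root of $(f^{(k)})'$; applying the lemma to $P=f^{(k)}$, whose roots lie in $\Gamma\subseteq\{\operatorname{Re}z\ge0\}$ by iterated Gauss-Lucas and are not all on $L$, shows that $0$ is a multiple root of $f^{(k)}$, contradicting $f^{(k)}(0)\ne 0$. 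Hence $f^{(k+1)}(0)\ne 0$, completing the induction. The main obstacle, and the reason the boundary hypothesis is indispensable, is precisely the degenerate possibility that the roots become collinear along $L$, where the conclusion genuinely fails (as $z^2+1$ shows); the care taken in choosing $L$ with $\Gamma\not\subseteq L$, together with the fixed-centroid remark, is exactly what rules this out at every stage of the induction.
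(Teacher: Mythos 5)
The paper offers no proof of this corollary: it is imported directly from Sudbury \cite{Su}, so there is nothing in the text to compare your argument against. Your proof is correct and self-contained. The two load-bearing steps are both sound: the refined Gauss--Lucas observation that a root of $P'$ lying on a supporting line $L$ of the root hull must be a multiple root of $P$ unless the whole hull sits inside $L$ (an affine function attaining its minimum at a relative-interior point of a convex set is constant there), and the fixed-centroid remark, which is exactly what guarantees at every stage of the induction that the roots of $f^{(k)}$ have not all collapsed onto $L$ --- iterated Gauss--Lucas alone only gives that the hull of the roots of $f^{(k)}$ lies in $\{\operatorname{Re}z\ge 0\}$, not that it escapes $L$, so this step is genuinely needed and you supplied it. You were also right to read ``located on its Gauss-Lucas polygon'' as meaning an extreme (relative-boundary) point: for collinear roots the literal topological boundary is the whole segment, and the statement fails for a simple root interior to it (e.g.\ $f(z)=z^{3}-z$ has $f''(0)=0$), while your three cases (vertex, segment endpoint, edge-interior point of a two-dimensional hull) cover precisely the configurations in which the paper later applies the corollary, namely in Propositions \ref{3roots} and \ref{4roots}.
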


This corollary yields another way of seeing that a non-trivial CA-polynomial $f$  must have at least one of its roots in its open Gauss-Lucas hull, and in particular that the number of distinct roots cannot be two.
Actually, we can also use Gauss-Lucas theorem to show that this number cannot be three either and more generally that a non-trivial CA-polynomial must have at least two distinct roots in its open Gauss-Lucas hull.

\begin{proposition}\label{3roots}
Let $f$ be a non-trivial Casas-Alvero polynomial of degree $N\ge 3$. Then $f$ has at least two distinct roots in its open Gauss-Lucas hull. In particular, $N\ge 5$ and $f$  has at least $4$ distinct roots.
\end{proposition}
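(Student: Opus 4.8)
The plan is to exhibit the two interior roots one at a time: the first is handed to us by the center of mass, and the second I would extract by a contradiction argument powered by Corollary \ref{sudbury}. Since a non-trivial CA-polynomial has more than two distinct roots, the remark on the center of mass shows that $G=\sigma_1(0)/N$ is a root of $f$ lying in the open Gauss-Lucas hull; this is the first interior root. Assume, for contradiction, that it is the only one. By Lemma \ref{change} I would normalise $G=0$ and let $\mu\ge 1$ be the multiplicity of $0$ as a root of $f$, so that every other root of $f$ lies on the Gauss-Lucas polygon. Let $M$ be the largest multiplicity among these boundary roots and fix a boundary root $\beta\neq 0$ of multiplicity $M$.

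The crux is then a lacunarity argument. By Corollary \ref{sudbury}, a boundary root $\gamma$ of multiplicity $m$ satisfies $f^{(k)}(\gamma)\neq 0$ for all $m\le k\le N-1$; hence for $i\ge M$ no boundary root can be a common root of $f$ and $f^{(i)}$. As $f$ is CA, for each $i\in\{M,\dots,N-1\}$ the common root of $f$ and $f^{(i)}$ must be the interior root $0$, so $f^{(i)}(0)=0$, that is $[z^i]f=0$. Combined with $[z^i]f=0$ for $i<\mu$, this forces $f(z)=c_Nz^N+\sum_{\mu\le i\le M-1}c_iz^i$ with $c_\mu,c_N\neq 0$; here the middle range is non-empty (otherwise $f=c_Nz^N$ would be trivial), so $1\le\mu<M$. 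Writing $P(z)=f(z)/z^\mu$, its low-order part has degree at most $M-1-\mu\le M-2$, so differentiating $M-1$ times annihilates it and $P^{(M-1)}$ is a non-zero scalar multiple of $z^{N-\mu-M+1}$. But $\beta\neq 0$ is a root of $P$ of multiplicity $M$, whence $P^{(M-1)}(\beta)=0$, contradicting that $P^{(M-1)}$ vanishes only at $0$. I expect this pure-power step to be the decisive one; the bookkeeping to watch is that $1\le\mu<M\le N-\mu$, which keeps the exponent $N-\mu-M+1$ strictly positive.

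For the numerical consequences, the open hull now contains at least two distinct roots, while the hull has at least two extreme points, which are roots of $f$ on the polygon and so distinct from the interior ones; this already yields at least four distinct roots. Finally, every CA-polynomial shares a root with $f'=f^{(1)}$ and therefore has a multiple root, so $N\ge(\#\{\text{distinct roots}\})+1\ge 5$. I would treat the degenerate situation of collinear roots by reading the open hull as the relative interior of the segment, consistently with the center-of-mass remark.
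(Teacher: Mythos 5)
Your proof is correct and follows essentially the same route as the paper: both get the first interior root from the center-of-mass remark, then apply Corollary \ref{sudbury} to the boundary root of maximal multiplicity $M$ to force the lacunary shape $f(z)=c_Nz^N+(\text{terms of degree}<M)$, and derive a contradiction from that boundary root. The paper's endgame Taylor-expands $f$ at $\zeta$ and evaluates at $0$ to get $\zeta^N(-1)^m\binom{N-1}{m}=0$, while yours divides by $z^\mu$ and differentiates $M-1$ times to reach a pure power vanishing only at $0$; the two finishes are interchangeable.
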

\begin{proof}
Assume that $f$ has exactly one root, say $0$, in its open Gauss-Lucas hull and let $\zeta$ be the one among the roots of $f$ located on the Gauss-Lucas polygon with maximal multiplicity $m$. Then by Corollary\ref{sudbury}, $f^{(m)}(0)=f^{(m+1)}(0)=\cdots=f^{(N-1)}(0)=0$ which means that for $j=m,\ldots, N-1$:
\[\displaystyle f^{(j)}(z)=\frac{N!}{(N-j)!}z^{N-j}.\]

Taylor expansion gives 
\[f(0)=\sum_{j=m}^N\frac{f^{(j)}(\zeta)}{j!}(-\zeta)^{j}=\zeta^N\sum_{j=m}^N (-1)^j{N \choose j}=\zeta^N (-1)^m{{N-1} \choose m}.\]
As $f(0)=0$, we get $\zeta=0$, which is a contradiction.
\end{proof}

\begin{remark} Proposition \ref{3roots} can also be deduced from Draisma and de Jong's result Proposition \ref{Compl2}.
\end{remark}

As pointed out in \cite{Dr-Jo}, arguments based only on Gauss-Lucas theorem are not sufficient to deal with polynomials of degree greater than 5.  In the next proposition, we will use Rolle's theorem and relations \ref{Relations} to go further :  
\begin{proposition}\label{4roots}
Let $f$ be a non-trivial Casas-Alvero polynomial of degree $N\ge 5$. Then $f$ has at least five distinct roots. In 
particular, $N\ge 6$.
\end{proposition}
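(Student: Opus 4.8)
The plan is to assume $f$ has exactly four distinct roots and force them onto a line, where Rolle's theorem and the Newton relations of Lemma~\ref{Relations} can be brought to bear. Suppose then, toward a contradiction, that $f$ is non-trivial of degree $N\ge 5$ with exactly four distinct roots. By Proposition~\ref{3roots} at least two of them lie in the open Gauss--Lucas hull and so are not vertices of $\Gamma$. The vertices of the convex hull of finitely many points are precisely its extreme points; a two-dimensional convex polytope has at least three of them, whereas here at most $4-2=2$ roots can be extreme. Hence $\Gamma$ is one-dimensional and the four roots are collinear. Applying the affine substitution of Lemma~\ref{change} we may assume they are real, $r_1<r_2<r_3<r_4$, with multiplicities $m_1,\dots,m_4$ summing to $N$; the extreme roots are $r_1,r_4$ and the interior ones $r_2,r_3$.

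Next I would exploit real-rootedness. By Rolle's theorem each $f^{(i)}$ is again real-rooted, and by Gauss--Lucas (Theorem~\ref{Gauss-Luca}) all its roots lie in $[r_1,r_4]$. As $r_1,r_4$ sit on the Gauss--Lucas polygon, Corollary~\ref{sudbury} gives $f^{(i)}(r_1)\ne0$ for $i\ge m_1$ and $f^{(i)}(r_4)\ne0$ for $i\ge m_4$. Therefore the extreme roots satisfy the CA-condition only for $i<\max(m_1,m_4)$, and for every index
\[
\max(m_1,m_4)\le i\le N-1
\]
the common root of $f$ and $f^{(i)}$ must be $r_2$ or $r_3$. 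By Proposition~\ref{N-2} no multiplicity reaches $N-2$, so $\max(m_1,m_4)\le N-3$ and this forced range is nonempty, containing in particular the top indices $N-3,N-2,N-1$.

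The engine of the contradiction is the remark following Lemma~\ref{Relations}: every derivative $f^{(i)}$ has the same center of mass $\sigma_1(0)/N$ as $f$, and this barycenter, being an interior root, is $r_2$ or $r_3$; after the reflection $z\mapsto -z$ we may take it to be $r_2$. Thus the roots of each $f^{(i)}$ are balanced about $r_2$. For $i=N-1$ this is just $f^{(N-1)}(r_2)=0$. For $i=N-2$ the quadratic $f^{(N-2)}$ has roots $r_2\pm\delta$ with $\delta\ne0$, so $f^{(N-2)}(r_2)\ne0$ and the shared root is forced onto $r_3$; writing $f(z)=\prod_k(z-\rho_k)$ and expanding at a root $c$ one has $f^{(i)}(c)=0$ exactly when $e_{N-i}(\{\rho_k-c\})=0$, so this reads $\sum_{j<k}(\rho_j-r_3)(\rho_k-r_3)=0$. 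Descending through $i=N-3,\dots,\max(m_1,m_4)$, Newton's relations turn each index into a vanishing central-moment condition at $r_2$ or at $r_3$. The aim is to show that this chain of polynomial conditions on the four numbers $r_1,\dots,r_4$ is over-determined and can be satisfied only if two of them coincide, contradicting distinctness.

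The hard part is precisely this final over-determination. Rolle's theorem supports the whole scheme in two ways: it makes all derivatives real-rooted, so their roots genuinely lie among and between the $r_i$ and the interlacing of successive derivatives is available, and it renders Descartes' sign count exact, bounding how many Taylor coefficients at $r_2$ and at $r_3$ can vanish. Playing this bound against the number $N-\max(m_1,m_4)$ of indices that must be covered --- large because Proposition~\ref{N-2} keeps every multiplicity below $N-2$ --- is where I expect the decisive inequality to originate; making it sharp enough to eliminate every multiplicity pattern $(m_1,\dots,m_4)$ is the principal difficulty.
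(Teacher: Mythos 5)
Your reduction to the collinear case is correct and is exactly how the paper begins: with exactly four distinct roots, Proposition~\ref{3roots} leaves at most two extreme points, so the hull is a segment and Lemma~\ref{change} puts the roots on the real line, after which all derivatives are real-rooted and Corollary~\ref{sudbury} forces the shared root of $f$ and $f^{(i)}$, for $i$ at least the larger extreme multiplicity, to be one of the two interior roots. But from that point on you have only a plan, not a proof: you yourself flag that ``making it sharp enough to eliminate every multiplicity pattern is the principal difficulty,'' and that difficulty is the entire content of the proposition. Nothing in the proposal actually produces the over-determination, and the tool you gesture at (an ``exact Descartes sign count'' bounding vanishing Taylor coefficients at $r_2$ and $r_3$) is not developed and is not obviously strong enough; the exact statement the paper needs is the Kostov--Shapiro lemma (Lemma~\ref{simple}): a root of $f$ of multiplicity $m\le i$ is at most a \emph{simple} root of $f^{(i)}$.

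Concretely, the paper closes the gap by splitting on the maximal multiplicity $M$ (which Proposition~\ref{N-2} confines to $2\le M\le N-3$). When $M\le N-5$, the top five derivative conditions alternate between the two interior roots; integrating upward from $f^{(N-1)}(z)=N!z$ with these constraints yields $f^{(N-5)}(z)=\frac{N!}{5!}z(z^2-5)^2$, whose double root violates Lemma~\ref{simple}. When $M=N-4$ or $M=N-3$, the same explicit computation of $f^{(N-2)},\dots,f^{(N-4)}$ is combined with the Newton relations of Lemma~\ref{Relations} for $j=1$ and $j=3$, giving $m_a a+m_b b+m_1=m_a a^3+m_b b^3+m_1=0$; sign considerations and the monotonicity of $t\mapsto t(1-t^2)$ then pin down the admissible multiplicity vectors to a handful of small cases ($N=5,6,7$), each of which is killed by an explicit algebraic identity. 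You would need to carry out an argument of this kind (or an equivalent one) for your proposal to constitute a proof; as written it establishes only the setup.
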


 \begin{proof}
 Assume that $f$ has four distinct roots. Then by the previous proposition, it  has at least two distinct roots in its open Gauss-Lucas hull. This implies that the four roots are on a line. By Lemma \ref{change}, we may assume that it is the real line. Then by Gauss-Lucas theorem, all its derivatives also have only real roots. 
  We will use the following lemma based on Rolle's theorem :
 \begin{lemma}\label{simple}\cite[Lemma 4.2]{Ko}
 A root of multiplicity $m$ of $f$, $0\le m\le i$ can be at most a simple root of $f^{(i)}$.
 \end{lemma}
 We denote by $M$ the maximal multiplicity of the roots of $f$. By Proposition \ref{N-2}, we know that  $2\le M\le N-3$.
\begin{itemize}
\item First case : $M\le N-5$. Again using Lemma \ref{change}, we may assume without loss of generality that the roots of $f$ are as follows : $a<0<1<b$ and $f^{(N-1)}(0)=0$.
By Corollary \ref{sudbury}, $a$ and $b$ cannot be zeros of $f^{(k)}$ for $N-5\le k\le N-1$. Besides, 
by Lemma \ref{simple}, each zero of $f^{(k)}$ is simple. Then we necessarily have $f^{(N-2)}(1)=0,\ f^{(N-3)}(0)=0,\ f^{(N-4)}(1)=0, \ f^{(N-5)}(0)=0$. Integrating five times the expression $f^{(N-1)}(z)=N!z$ and taking into account these constraints, we get $\displaystyle f^{(N-5)}(z)=\frac{N!}{5 !}z(z^2-5)^2$. But this contradicts Lemma \ref{simple}.

\item Second case : $M=N-4$ (which implies that $N\ge 6$). By Lemma \ref{change}, we arrange the roots as follows : $a<0<b<1$ and we assume that $f^{(N-1)}(0)=0$. Denote by $m_a,\ m_0,\ m_b,\ m_1$ their respective multiplicities. Then again by Lemma \ref{simple}, we must have $f^{(N-2)}(b)=0$, $f^{(N-3)}(0)=0$, $f^{(N-4)}(b)=0$. Like in the first case, computing the last derivatives, we get 
\[\begin{split}
&f^{(N-1)}(z)=N!z, \ \ \ 2! f^{(N-2)}(z)=N!(z^2-b^2),\\
&3! f^{(N-3)}(z)=N!z(z^2-3b^2),
 \ \ 4! f^{(N-4)}(z)=N!z(z^2-5b^2)(z^2-b^2).
\end{split}
 \]
Obviously, as $f^{(N-4)}(b)=0$, we have $m_b\le N-5$.
From Gauss-Lucas theorem, we deduce that $a<-\sqrt 5 b$. Now we apply Lemma \ref{Relations}  with $l=0$, $j=1$ and with $l=0$, $j=3$ to obtain
\begin{equation}\label{equ}
m_a a+m_b b+m_1=m_a a^3+m_b b^3+m_1=0.
\end{equation}
We deduce that $m_a a(a^2-1)=-m_bb(b^2-1)$ and looking at the sign, we see that $-a<1$. Then $m_a>-am_a=m_b b+m_1>m_1$ which implies that $m_a\ge 2$ and $m_1\le N-5$.

Let us note that in the case where  $m_a=2, m_1=m_b=1$, equations \ref{equ} give : $a(a+1)^2=0$. 

Thus, this case cannot occur. We can readily deduce that $m_0\le N-5$.
The only possibility left is $m_a=M=N-4$.

 From the relation
$-(N-4)a(1-a^2)=m_bb(1-b^2)$, we deduce that $\phi(-a)\le \phi(b)$ where we put $\phi(t)=t(1-t^2)$. But $\phi$ is increasing on $[0,1/\sqrt 3]$ and we know that $-a>b>0$. Thus we have $-a>1/\sqrt 3$. Now we get back to the linear equation in \ref{equ} : 
\[N-4=m_b\frac{b}{-a}+m_1\frac{1}{-a}<\frac{m_b}{\sqrt 5}+m_1\sqrt 3.\]
If $m_1=m_b=1$, this leads to $N=6$ and $m_a=2$. But we have already shown above that this case can't occur.

If $m_1=1, \ m_b=2$, we also obtain $N=6$ and $m_a=2$.

 Equations \ref{equ} now imply that $4a^2+2a-1=0$. But this equation has no real solution.

If $m_1=2, \ m_b=1$, we obtain $N\le 7$. On the other hand, we know that $m_1\le N-5$. Thus $N=7$ and $m_a=3$. 

Equations \ref{equ} imply $(a+1)^2(4a+1)=0$. Thus we must have $a=-1/4$ and $b=-5/4$. But this contradicts the fact that $b>0$.

Finally, the second case leads to a contradiction.

\item Third case : $M=N-3$. We proceed similarly to the previous case.  Again, we obtain  that $m_a\ge 2$. Thus we necessarily have : $m_a=M$, $m_0=m_1=m_b=1$. The linear equation in \ref{equ} gives : 
\[N-3=\frac{b}{-a}+\frac{1}{-a}<\frac{1}{\sqrt 5}+\sqrt 3<3.\]
It follows that $N=5$ and $m_a=2$. But we have already shown that this is impossible.
\end{itemize}

\end{proof}

\section{Polynomials of degree $p^r+1$}

We quickly recall the definition of the $p$-adic valuation (for more details, the reader is referred to \cite{Rib})

\begin{definition} Let $p$ be a prime number. For a positive integer $n\in\NN^*$, the $p$-adic valuation $v_p(n)$ is defined
to be the exponent of $p$ in the prime decomposition of $n$: If $n=p^r\cdot n^\prime$ with $n^\prime$ prime to $p$, then $v_p(n)=r$.
\end{definition}

The map $v_p\colon \NN^*\to \NN$ can be extended to a map $v_p\colon \C\to\QQ\cup\{+\infty\}$ (\cite[Chapter 4, Theorem 1]{Rib}). This map satisfies the following properties:

\begin{itemize}
\item $v_p(c)=+\infty$ if and only if $c=0$;
\item $v_p(ab)=v_p(a)+v_p(b)$ for all $a$, $b$ in $\C$;
\item $v_p(a+b)\ge \min\{ v_p(a), v_p(b) \}$ for all $a$, $b$ in $\C$.
\end{itemize}
 \begin{remark}
 It is important to note that the last property implies :
 $v_p(a+b)= \min\{ v_p(a), v_p(b) \}$ if $v_p(a)\not=v_p(b)$.
 
 We will make a frequent use of this fact.
 \end{remark}
 
\begin{lemma}\label{p^r}\cite[Lemma 2.4]{Graf}
Let $r\ge 1$ be an integer and $p$ be a prime number. Then $v_p({{p^r}\choose j})\ge 1$ for $1\le j\le p^r-1$.
\end{lemma}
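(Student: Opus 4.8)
The plan is to exploit the absorption identity
\[
j\binom{p^r}{j}=p^r\binom{p^r-1}{j-1},
\]
which holds for all $1\le j\le p^r$ and follows immediately from writing out the factorials. This reduces a statement about a single binomial coefficient to a bookkeeping of $p$-adic valuations, where the properties of $v_p$ recalled above—multiplicativity together with the fact that $v_p$ of a nonzero integer is a nonnegative integer—do all the work.

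First I would apply $v_p$ to both sides. By multiplicativity the left side becomes $v_p(j)+v_p\binom{p^r}{j}$ and the right side becomes $v_p(p^r)+v_p\binom{p^r-1}{j-1}=r+v_p\binom{p^r-1}{j-1}$. Rearranging gives
\[
v_p\binom{p^r}{j}=r-v_p(j)+v_p\binom{p^r-1}{j-1}.
\]
The key observation is then simply that $v_p(j)\le r-1$ whenever $1\le j\le p^r-1$: since $j<p^r$, the prime power $p^r$ cannot divide $j$, so $v_p(j)<r$, i.e. $v_p(j)\le r-1$. Because $\binom{p^r-1}{j-1}$ is a positive integer we have $v_p\binom{p^r-1}{j-1}\ge 0$, and therefore $v_p\binom{p^r}{j}\ge r-(r-1)+0=1$, which is exactly the claim.

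There is no genuine obstacle here; the only point requiring care is that the range $1\le j\le p^r-1$ is used precisely to force the strict bound $v_p(j)\le r-1$. At the excluded endpoint $j=p^r$ one would have $v_p(j)=r$ and the estimate collapses, consistent with $\binom{p^r}{p^r}=1$ being a $p$-adic unit. As an alternative route one could invoke Kummer's theorem, according to which $v_p\binom{p^r}{j}$ equals the number of carries occurring when $j$ and $p^r-j$ are added in base $p$, and then observe that building $p^r$ (a single leading digit followed by zeros) out of two summands each strictly smaller than $p^r$ forces at least one carry. I prefer the absorption-identity argument, however, since it is shorter and stays entirely within the elementary valuation framework already set up in this section.
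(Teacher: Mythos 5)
Your argument is correct: the absorption identity $j\binom{p^r}{j}=p^r\binom{p^r-1}{j-1}$ together with $v_p(j)\le r-1$ for $1\le j\le p^r-1$ and $v_p\binom{p^r-1}{j-1}\ge 0$ gives $v_p\binom{p^r}{j}\ge 1$ cleanly. The paper does not reproduce a proof at all (it simply cites Lemma 2.4 of \cite{Graf}), so there is nothing to compare against; your proof is a complete, elementary, and standard justification of the cited fact.
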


From the relation $v_p({p^r+1\choose k})=v_p({{p^r}\choose {k-1}} +{{p^r}\choose k})$ we immediately deduce : 

\begin{lemma}\label{binom} Let $p$ be a prime, and suppose $N=p^r+1$. Then
\[ v_p({N\choose k})\ge 1\]
for $2\le k\le N-2$.
\end{lemma}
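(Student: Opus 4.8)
The plan is to reduce the statement to Lemma~\ref{p^r} by means of Pascal's identity together with the ultrametric property of $v_p$, exactly as suggested by the displayed relation preceding the statement. Writing $N=p^r+1$, Pascal's rule gives
\[{N\choose k}={p^r\choose k-1}+{p^r\choose k},\]
so it suffices to control the $p$-adic valuation of each of the two summands on the right-hand side.

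First I would verify that both indices appearing in these binomial coefficients fall in the range where Lemma~\ref{p^r} can be invoked. For $2\le k\le N-2=p^r-1$ we have $1\le k\le p^r-1$ and likewise $1\le k-1\le p^r-2\le p^r-1$. Hence Lemma~\ref{p^r} applies to both coefficients and yields $v_p{p^r\choose k}\ge 1$ and $v_p{p^r\choose k-1}\ge 1$ simultaneously.

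It then remains to combine these two bounds. Applying the third (ultrametric) property of the valuation to the sum above gives
\[v_p{N\choose k}=v_p\left({p^r\choose k-1}+{p^r\choose k}\right)\ge\min\left\{v_p{p^r\choose k-1},\,v_p{p^r\choose k}\right\}\ge 1,\]
which is precisely the desired inequality for $2\le k\le N-2$.

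There is essentially no genuine obstacle here; the argument is a one-line deduction from the previous lemma. The only point worth keeping in mind is that one should use the valuation inequality in its safe, non-strict form $v_p(a+b)\ge\min\{v_p(a),v_p(b)\}$: we neither need nor can assert equality, since the two valuations $v_p{p^r\choose k-1}$ and $v_p{p^r\choose k}$ may coincide and the sum could in principle exhibit extra cancellation. The lower bound $\ge 1$ is nonetheless guaranteed regardless of any such cancellation, so the conclusion follows at once.
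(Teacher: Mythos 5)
Your proof is correct and follows exactly the paper's route: Pascal's identity $\binom{p^r+1}{k}=\binom{p^r}{k-1}+\binom{p^r}{k}$ combined with Lemma~\ref{p^r} and the ultrametric inequality, with the index ranges properly checked. Nothing to add.
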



\begin{proposition}
Let $p$ be a prime number and let $f$ be a non-trivial  polynomial of degree $N=p^r+1$. Assume that $f$ has a common root with $f^{(2)},\cdots, f^{(N-1)}$. Let $c$ be the center of mass of $f$. Then the following conditions are satisfied :
\begin{itemize}
\item $f'(c)\not=0$, 
\item If $p\ge 3$, there is no  $w\in \C^*$ such that $f(c-w)=f(c+w)=0$,
\item If $p\ge 3$, there is no $w\in \C^*$ such that $f'(c-w)=f'(c+w)=0$.
\end{itemize}
\end{proposition}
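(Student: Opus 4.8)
The plan is to run a $p$-adic reduction argument in the spirit of Draisma and de Jong. First I would normalize using Lemma \ref{change}: translate so that the center of mass is $c=0$, which forces $a_1=0$, and then rescale so that the minimal $p$-adic valuation among the nonzero roots of $f$ equals $0$; thus every root of $f$ has valuation $\ge 0$. Since $f^{(N-1)}$ has $c$ as its only root and shares a root with $f$, we get $f(0)=0$, so $a_N=0$ and $0$ is genuinely a root.

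The technical heart is a valuation estimate on the coefficients, namely $v_p(a_k)\ge 0$ for $2\le k\le N-2$, which I would prove by induction on $k$. For each such $k$ the derivative $f^{(N-k)}$ is a scalar multiple of the monic polynomial $P_k(z)=\sum_{j=0}^{k}\binom{k}{j}a_j z^{k-j}$ (with vanishing $z^{k-1}$-coefficient, since $a_1=0$), and by hypothesis $P_k$ shares a root with $f$, hence has a root of valuation $\ge 0$. Assuming inductively that $a_2,\dots,a_{k-1}$ are integral, all the non-leading, non-constant coefficients of $P_k$ have valuation $\ge 0$; if $v_p(a_k)<0$, then the Newton polygon of $P_k$ is a single segment of positive slope from the constant term to the leading term, forcing every root of $P_k$ to have strictly negative valuation and contradicting the shared root. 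Hence $v_p(a_k)\ge 0$. The base case $k=2$ is the same argument applied to $f^{(N-2)}=\tfrac{N!}{2}(z^2+a_2)$, whose roots $\pm\sqrt{-a_2}$ cannot have negative valuation.

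Now I would reduce modulo $p$. Writing $f(z)=\sum_{k}(-1)^k e_k z^{N-k}$, Lemma \ref{binom} gives $v_p\binom{N}{k}\ge 1$, and since $\binom{N}{k}a_k=(-1)^k e_k$ the previous step yields $v_p(e_k)\ge 1$ for $2\le k\le N-2$; together with $e_1=0$ (center of mass) and $e_N=0$ (root at $0$) the reduction collapses to $\bar f(z)=z^N+(-1)^{N-1}\bar e_{N-1}z$. As $N-1=p^r$, Frobenius gives $\bar f(z)=z\,(z-\beta)^{p^r}$, with $\beta\ne 0$ precisely because the normalization guarantees a root of valuation $0$. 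From this single rigid fact the three bullets follow. For the first, $0$ is a simple root of $\bar f$, so exactly one root of $f$ reduces to $0$, namely $c$ itself; hence $c$ is a simple root and $f'(c)\ne 0$. For the second, a pair $\pm w$ of roots of $f$ with $w\ne 0$ would reduce into $\{0,\beta\}$; but $0$ is already accounted for by $c$, so no nonzero root reduces to $0$, forcing $\bar w=-\bar w=\beta$ and hence $2\beta=0$, impossible when $p\ge 3$. For the third, I would first note $v_p(a_{N-1})=0$ (from $\bar e_{N-1}\ne 0$), so $f'$ has integral coefficients and its roots have valuation $\ge 0$; then $\overline{f'}=(\bar f)'=(z-\beta)^{p^r}$ because $p^r\equiv 0$, and the same reflection argument applied to a symmetric pair of roots of $f'$ again forces $2\beta=0$, impossible for $p\ge 3$.

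The main obstacle is the inductive bound $v_p(a_k)\ge 0$: this is where the full strength of the hypothesis (a common root with every $f^{(i)}$, $2\le i\le N-1$) and the choice of normalization are consumed, and it is what converts the combinatorial input of Lemma \ref{binom} into the reduction $\bar f=z(z-\beta)^{p^r}$. The role of $p\ge 3$ in the last two bullets is equally essential and transparent: it is exactly the implication $2\beta=0\Rightarrow\beta=0$, which fails in characteristic $2$, where $w$ and $-w$ become indistinguishable after reduction.
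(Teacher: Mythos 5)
Your proof is correct and follows essentially the same route as the paper: the same normalization via Lemma \ref{change} (center of mass at $0$, minimal root valuation $0$), the same Draisma--de Jong induction giving $v_p(a_l)\ge 0$, and the same appeal to Lemma \ref{binom} to kill the middle binomial coefficients. The only difference is in the endgame and is essentially cosmetic: where the paper compares valuations of the two sides of the identities $f(z_j)=0$ and $f(w)-f(-w)=0$, you compute the full reduction $\bar f=z(z-\beta)^{p^r}$ via Frobenius and read all three bullets off its shape --- an equivalent, arguably cleaner, packaging of the same mechanism.
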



\begin{proof}
We may assume without loss of generality, using Lemma \ref{change}, that $f$ is of the form

\[ f(z)=z^{N}+{N \choose 2} a_2z^{N-2}+\cdots+ {N \choose k} a_k z^{N-k}+\cdots+{N \choose {2}} a_{N-2}z^2+a_{N-1}z\] and that $\min\{v_p(z_j),\ j=1,\cdots,N\}=0$,
where we have denoted by $z_1,z_2,\cdots,z_{N-1}, z_N=0$ the zeros of $f$.

For $l=1,\cdots,N-1$, we have:

\begin{equation}\label{derivee1}
\frac{l!}{N!}f^{(N-l)}(z)=z^{l}+{l\choose 2} a_2z^{l-2}+\cdots+ {l\choose k} a_k z^{l-k}+\cdots+{l\choose {1}}a_{l-1}z+a_l.
\end{equation}
Following the proof of  \cite[Proposition 9]{Dr-Jo}, using equality (\ref{derivee1}) with $l=2,\cdots, N-2$ and $z$ the common root of $f^{(N-l)}$ and $f$, we prove by induction on $l$  that 
\begin{equation}\label{valuations1}
v_p(a_l)\ge 0 \ \ \ \hbox{ for all\ }  \ l=2,\cdots, N-2.
\end{equation}

Let $z_j$ be such that $v_p(z_j)=0$.  The equality
\[-a_{N-1}z_j=z_j^N+{N\choose 2} a_2z_j^{N-2}+\cdots+ {N\choose k} a_k z_j^{N-k}+\cdots+{N\choose {2}}a_{N-2} z_j^2\]
shows that $v_p(a_{N-1})=0$. In particular, $f'(0)\not=0$. Besides, as  $a_{N-1}=\prod_{j=0}^{N-1}z_j$,  we have $v_p(z_j)=0$ for $j\in\{1,\cdots,N-1\}$. 

If $p\ge 3$, we can also see that it is not possible to have a non-zero complex number $w$ such that $f(w)=f(-w)=0$. Indeed, otherwise, $0=f(w)-f(-w)$ would give the identity :
\[-a_{N-1}w={N\choose 3} a_3w^{N-3}+{N\choose 5} a_5w^{N-5}+\cdots+ {N\choose {3}} a_{N-3}w^3\]
which contradicts (\ref{valuations1}). 

We can use the same argument to show that there is no complex number $w$ such that $f'(w)=f'(-w)=0$. 
\end{proof}

\begin{remark} For $N=p+1$, it has been noted independently by Castryck that there is no non-trivial CA--polynomial of degree $N$ whose center of mass coincides with its double root \cite[Lemma 9]{Cast}.
\end{remark}

From now on, we will assume that $f$ is a non-trivial CA-polynomial of degree $N=p+1$ where $p\ge 3$ is a prime number. 

Using once again Lemma \ref{change}, we may assume that 

\begin{equation}\label{p+1}
\left\{
\begin{array}{ll}
&f(z)=z^{N}+N a_1z^{N-1}+{N\choose 2} a_2z^{N-2}+\cdots+ {N\choose k} a_k z^{N-k}+\cdots+{N\choose {2}} a_{N-2}z^2,\\
& \\
&N=p+1,\ \min\{v_p(z_j),\ j=1,\cdots,N\}=0,\ \ ,\end{array}
\right.
\end{equation}
where we have denoted by $z_1,\cdots, z_{N-3}, z_{N-2}=z_{N-1}=0, z_N=-a_1$ the roots of $f$.

For $l=1,\cdots,N-1$, we have:

\begin{equation}\label{derivee2}
\frac{l!}{N!}f^{(N-l)}(z)=z^{l}+{l\choose 1} a_1z^{l-1}+{l\choose 2} a_2z^{l-2}+\cdots+ {l\choose k} a_k z^{l-k}+\cdots+{l\choose {1}}a_{l-1}z+a_l.
\end{equation}
Observe that $v_p(a_1)\ge 0$ because $-a_1$ (the center of mass of $f$) is one of the roots of $f$.
Once again following the proof of \cite[Proposition 9]{Dr-Jo} and using equality (\ref{derivee2}) with $l=2,\cdots, N-2$ and $z$ the common root of $f^{(N-l)}$ and $f$, we prove by induction on $l$  that 
\begin{equation}\label{valuations2}
v_p(a_l)\ge 0 \ \ \ \hbox{ for all\ }  \ l=2,\cdots, N-2.
\end{equation}

Let $z_j$ be such that $v_p(z_j)=0$.  The equality
\[-N a_1 z_j^{N-1}= z_j^N+{N\choose 2} a_2z_j^{N-2}+\cdots+ {N\choose k} a_k z_j^{N-k}+\cdots+{N\choose {2}}a_{N-2} z_j^2\]
shows that $v_p(a_1)=0$. Therefore, we may assume without loss of generality that $a_1=-1$. Then we can write
$f(z)=(z-1)g(z)$ where
\[
\begin{split}
g(z)&=\Bigl(z^{N-1}-(N-1)z^{N-2}+({N\choose 2} a_2-(N-1))z^{N-3}+\\
&({N\choose 3} a_3+{N\choose 2}a_2-(N-1))z^{N-4}+\cdots+({N\choose 3}a_{N-3}+\cdots+{N\choose {2}} a_2-(N-1))z^2\Bigr).
\end{split}\]
In view of (\ref{valuations2}) and Lemma \ref{binom}, all roots of $g$ have strictly positive valuations  (actually greater than $1/(N-3)$). As a consequence, we see that $1$ is a simple root of of $f$ and that $v_p(z_j)>0$ for $j=1,\cdots,N-3$.

Whenever $f^{(N-l)}(1)\not=0$, the Casas-Alvero property implies that $f^{(N-l)}(z_j)=0$ with $v_p(z_j)>0$ and from equality (\ref{derivee2}) we get  $v_p(a_l)>0$.

But as 
\[ f(1)=1-N+{N\choose 2}a_2+\cdots+{N\choose k} a_k+\cdots+{N\choose {2}} a_{N-2}=0,\] 
there is at least one index $2\le l\le N-2$ such that $v_p(a_l)=0$. In other words, at least one of the derivatives $f^{(N-l)}(1)=0$. If we put this result together with Proposition \ref{Compl2}, we get :

\begin{lemma}\label{1fois}
Let $f$ be a non-trivial CA-polynomial of degree $N=p+1$, $p$ prime. Let $c$ be the center of mass of $f$. Then the  following conditions are satisfied :
\begin{itemize}
\item $f^{(N-1)}(c)=0$,
\item $f^{(l)}(c)\not=0$ for at least one $l\in \{2,\cdots,N-2\}$,
\item $f^{(k)}(c)=0$ for at least  one $k\in \{2,\cdots,N-2\}$.
\end{itemize}
\end{lemma}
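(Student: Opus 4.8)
The statement bundles three assertions, and my plan is to read each one off the normalized setup developed just above the statement, with Proposition~\ref{Compl2} supplying the only genuinely new ingredient. Recall that after the reductions in force we have $a_1=-1$, so the center of mass is $c=-a_1=1$; moreover $1$ is a \emph{simple} root of $f$, while the remaining non-trivial roots $z_j$ satisfy $v_p(z_j)>0$.

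For the first bullet I would simply note that $f^{(N-1)}(z)=N!\,(z+a_1)$ has $-a_1=c$ as its unique root, exactly the observation recorded in the remark following Lemma~\ref{Relations}; hence $f^{(N-1)}(c)=0$. The third bullet is precisely the conclusion reached in the discussion preceding the statement: expanding $f(1)=0$ and using $v_p(1-N)=1$ together with $v_p({N\choose k})\ge1$ (Lemma~\ref{binom}), I would argue that not all of $a_2,\dots,a_{N-2}$ can have strictly positive valuation, so $v_p(a_l)=0$ for some $l\in\{2,\dots,N-2\}$. Were $f^{(N-l)}(1)\ne0$, the Casas-Alvero property combined with (\ref{derivee2}) would force $v_p(a_l)>0$; so in fact $f^{(N-l)}(c)=0$, and $k=N-l$ again lies in $\{2,\dots,N-2\}$.

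The second bullet is where I would invoke Proposition~\ref{Compl2}, arguing by contradiction. Suppose $f^{(l)}(c)=0$ for \emph{every} $l\in\{2,\dots,N-2\}$. Together with the first bullet this yields $f^{(l)}(c)=0$ for all $l\in\{2,\dots,N-1\}$, so for each such index the common root of $f$ and $f^{(l)}$ provided by the Casas-Alvero property may be taken to be $c$ itself. For the remaining index $i=1$, I would pick a common root $\alpha_1$ of $f$ and $f'$; since $c=1$ is a simple root of $f$ we have $f'(c)\ne0$, whence $\alpha_1\ne c$. The family of common roots would then be $\{\alpha_1,\dots,\alpha_{N-1}\}=\{c,\alpha_1\}$, a set of cardinality exactly two, contradicting Proposition~\ref{Compl2}. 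Hence $f^{(l)}(c)\ne0$ for at least one $l\in\{2,\dots,N-2\}$.

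The main, and essentially the only, obstacle is this last step: the whole argument hinges on verifying that the collapse hypothesis forces the common roots into a two-element set. This in turn needs both the simplicity of the root $c$, so that $\alpha_1$ is genuinely distinct from $c$, and the existence of the multiple root $\alpha_1$ furnished by the Casas-Alvero hypothesis. Since both facts are already secured by the normalization, once they are in place the contradiction with Proposition~\ref{Compl2} is clean, and the three bullets together give the lemma.
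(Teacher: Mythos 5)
Your proposal is correct and follows essentially the same route as the paper: the first bullet is the center-of-mass observation from the remark after Lemma~\ref{Relations}, the third is the paper's valuation argument from $f(1)=0$ combined with Lemma~\ref{binom} and (\ref{valuations2}), and the second is the deduction from Proposition~\ref{Compl2} that the paper leaves implicit, which you correctly flesh out using the simplicity of the root $c=1$ guaranteed by the normalization.
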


Let us now go further into the investigation of the orders of the derivatives having the center of mass as a root.

We may again assume that $f$ is of the form (\ref{p+1}) and that $a_1=-1$.

For the sake of clarity, we will use the notation $x\equiv y$ if $v_p(x-y)>0$.

In view of Lemma \ref{1fois}, let $\ l_1<l_2<\cdots<l_m\ $ be the indices between $2$ and $N-2$ such that 
$f^{(N-l_j)}(1)=0$, $j=1,\cdots,m$. 

As observed previously, for all $k\in\{2,\cdots,N-2\}$, we have $v_p(a_k)\ge0$. Moreover, if $k\notin \{l_1,\cdots, l_m\}$, $a_k\equiv 0$.

From equality (\ref{derivee2}) with $z=1$ and $l=l_1, l_2\cdots, l_m$, we get 
\begin{equation}\label{S}
\left\{
\begin{array}{ll}
1-l_1+a_{l_1} & \equiv 0\\
1-l_2+\binom{l_2}{l_1}a_{l_1}+a_{l_2} & \equiv 0\\
\ \ \ \ \  \vdots\\
1-l_m+\binom{l_m}{l_1} a_{l_1}+\binom{l_m}{l_{2}}a_{l_2} +\cdots+ a_{l_m} & \equiv 0\\
\end{array}
\right.
\end{equation}
Now, using that  $\frac{f(1)}{p}=0$ and that $v_p(\binom{k}{N})\ge 1$ for $k=2,\cdots, N-2$, we obtain
\begin{equation}\label{f(1)=0}
-1+\frac{\binom{N}{l_1}}{p} a_{l_1}+\cdots+\frac{\binom{N}{l_m}}{p}a_{l_m} \equiv 0.
\end{equation}
 Now observe that
 for all $2\le l\le N-2$, we have :
\[\begin{split}
\frac{{N\choose l}}{p}&=\frac{N(N-2)(N-3)\cdots(N-(l-1))}{l!}\\
&=\frac{(p+1)(p-1)(p-2)\cdots(p-(l-2))}{l!}\\
&=\frac{1}{l!}(p^{l-1}+\alpha_{l-2}p^{l-2}+\cdots+
\alpha_1p)+\frac{(-1)^{l-2}(l-2)!}{l!}\end{split}\]
	where $\alpha_1,\cdots, \alpha_{l-2}$ are integers. 

Therefore: \[ \frac{\binom{N}{l}}{p}\equiv \frac{(-1)^l}{l(l-1)}.\]

Putting equations (\ref{S}) and (\ref{f(1)=0}) together and putting $\tilde a_{l_j}=\frac{a_{l_j}}{l_j(l_j-1)}$, we obtain:

\begin{equation}\label{systeme}
\left\{
\begin{array}{ll}
-1+l_1\tilde a_{l_1} & \equiv 0\\
-1+\binom{l_2-2}{l_1-2}l_2 \tilde a_{l_1}+l_2\tilde a_{l_2} & \equiv 0\\
\ \ \ \ \  \vdots\\
-1+\binom{l_m-2}{l_1-2}l_m \tilde a_{l_1}+\binom{l_m-2}{l_{2}-2}l_m \tilde a_{l_2} +\cdots+ l_m \tilde a_{l_m} & \equiv 0\\
-1+(-1)^{l_1} a_{l_1}+(-1)^{l_2}\tilde a_{l_2}+\cdots+(-1)^{l_m}\tilde a_{l_m} & \equiv 0.
\end{array}
\right.
\end{equation}

Let us define the determinant  
\begin{equation}\label{Delta}
\Delta=\left[
\begin{array}{c c c  c  c c }
-1& l_1 & 0  & 0 &\cdots  & 0\\
-1 & \binom{l_2-2}{l_1-2}l_2 & l_2  & 0 &\cdots & 0 \\
\vdots & \vdots & \vdots  &\vdots & & \vdots \\
-1 & \binom{l_m-2}{l_1-2}l_m & \binom{l_m-2}{l_{2}-2}l_m & &\cdots & l_m \\
-1 & (-1)^{l_1} & (-1)^{l_2} & & \cdots  & (-1)^{l_m}
\end{array}
\right].
\end{equation}

We see that necessarily $\Delta=0$. Otherwise, inverting (\ref{systeme}), we would get in particular that $1\equiv 0$. Let us summarize this result by: 

\begin{lemma}\label{DeltaL}
Let $f$ be a Casas-Alvero polynomial of the form (\ref{p+1}) with $a_1=-1$. Let $\ l_1<l_2<\cdots<l_m\ $ be the indices between $2$ and $N-2$ such that 
$f^{(N-l_j)}(1)=0$, $j=1,\cdots,m$ and let $\Delta$ the determinant defined by (\ref{Delta}). Then
$p$ divides $\Delta$.
\end{lemma}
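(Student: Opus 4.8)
The plan is to read the whole system (\ref{systeme}) as a single matrix congruence and then reduce it modulo the maximal ideal of the valuation, turning the (somewhat informal) remark that ``inverting (\ref{systeme}) would give $1 \equiv 0$'' into the precise statement $p \mid \Delta$.

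First I would note that every entry of the matrix in (\ref{Delta}) is $\pm 1$, one of the integers $l_j$, or a product of such an $l_j$ with a binomial coefficient; hence $\Delta \in \mathbb{Z}$, and for an integer the assertion ``$p$ divides $\Delta$'' is literally the same as ``$v_p(\Delta) > 0$'', i.e.\ $\Delta \equiv 0$. Next I would rewrite (\ref{systeme}) as the single congruence $\Delta \cdot v \equiv 0$, understood coordinate-wise, where $v = (1, \tilde a_{l_1}, \ldots, \tilde a_{l_m})^{T}$. Indeed, reading off the coefficients of the $m+1$ lines of (\ref{systeme}) — the leading $-1$ of each line multiplying the first coordinate $1$ of $v$, and the coefficients of $\tilde a_{l_1}, \ldots, \tilde a_{l_m}$ filling the remaining columns — reproduces exactly the rows of (\ref{Delta}); this is a purely bookkeeping verification.

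The substance of the argument is to pass to the residue field. Let $\mathcal{O} = \{c \in \C : v_p(c) \ge 0\}$ be the valuation ring, $\mathcal{M} = \{c \in \C : v_p(c) > 0\}$ its maximal ideal, and $k = \mathcal{O}/\mathcal{M}$ the residue field, with reduction homomorphism $\pi\colon \mathcal{O} \to k$; by definition $x \equiv y$ means precisely $\pi(x) = \pi(y)$, and for $n \in \mathbb{Z}$ one has $\pi(n) = 0$ exactly when $p \mid n$. I must check that $v$ lies in $\mathcal{O}^{m+1}$, i.e.\ that each $\tilde a_{l_j} = a_{l_j}/(l_j(l_j-1))$ has non-negative valuation. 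Since $N = p+1$ we have $2 \le l_j \le N-2 = p-1$, so both $l_j$ and $l_j - 1$ are integers prime to $p$, whence $v_p(l_j(l_j-1)) = 0$ and $v_p(\tilde a_{l_j}) = v_p(a_{l_j}) \ge 0$ by (\ref{valuations2}). Therefore $v$ reduces to $\bar v = \pi(v) \in k^{m+1}$, and $\bar v \neq 0$ because its first coordinate equals $\pi(1) = 1$.

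Finally, applying $\pi$ entrywise to $\Delta v \equiv 0$ yields the genuine linear equation $\bar\Delta\,\bar v = 0$ over the field $k$, where $\bar\Delta$ is the reduction of the integer matrix (\ref{Delta}). Since $\bar v$ is a nonzero vector in $\ker \bar\Delta$, the matrix $\bar\Delta$ is singular over $k$, so $\det \bar\Delta = 0$; as the determinant commutes with the ring homomorphism $\pi$, this says $\pi(\Delta) = 0$, i.e.\ $p \mid \Delta$. The only point that requires any care is the verification that the $\tilde a_{l_j}$ are integral (so that $v$ reduces to a residue-field vector), which is exactly where (\ref{valuations2}) and the coprimality of $l_j, l_j-1$ with $p$ enter; the remainder is the standard fact that a square matrix over a field admitting a nonzero kernel vector has vanishing determinant.
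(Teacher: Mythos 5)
Your proof is correct and takes essentially the same route as the paper, which disposes of the lemma in one line ("otherwise, inverting (\ref{systeme}) would give $1\equiv 0$"); you have simply made that inversion argument rigorous by reducing to the residue field, checking the integrality of the $\tilde a_{l_j}$ via $\gcd(l_j(l_j-1),p)=1$, and invoking the nonzero-kernel-vector criterion for $\det\bar\Delta=0$.
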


\
\begin{proposition}\label{2fois}
Let $f$ be a non-trivial  CA-polynomial of degree $N=p+1$, $p$ prime. Then there are at least two indices $2\le l_1<l_2\le N-2$ such that 
 $f^{(l_1)}(c)=f^{(l_2)}(c)=0$.
\end{proposition}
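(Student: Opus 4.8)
The plan is to recast the statement in the language of Lemma~\ref{DeltaL} and then argue by contradiction. Working in the normalization (\ref{p+1}) with $a_1=-1$, so that the center of mass is $c=1$, the orders $k\in\{2,\dots,N-2\}$ for which $f^{(k)}(c)=0$ are exactly the integers $N-l_j$, where $l_1<\dots<l_m$ are the indices of Lemma~\ref{DeltaL} (as $l\mapsto N-l$ is a bijection of $\{2,\dots,N-2\}$ onto itself). Hence the proposition is precisely the assertion that $m\ge 2$. Since Lemma~\ref{1fois} already guarantees $m\ge 1$, the only remaining case to eliminate is $m=1$.

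So I would suppose $m=1$ and invoke Lemma~\ref{DeltaL}, which forces $p\mid\Delta$. For $m=1$ the determinant (\ref{Delta}) degenerates to the $2\times 2$ determinant
\[
\Delta=\begin{vmatrix} -1 & l_1\\ -1 & (-1)^{l_1}\end{vmatrix}=l_1-(-1)^{l_1},
\]
so the divisibility condition becomes $p\mid\bigl(l_1-(-1)^{l_1}\bigr)$, with $l_1$ ranging over $\{2,\dots,N-2\}=\{2,\dots,p-1\}$.

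The final step is a parity count. Since $p\ge 3$ is odd, $N-2=p-1$ is even. If $l_1$ is even then $\Delta=l_1-1\in\{1,\dots,p-2\}$, while if $l_1$ is odd then $\Delta=l_1+1\in\{4,\dots,p-1\}$; in either case $0<\Delta<p$, so $p\nmid\Delta$, contradicting Lemma~\ref{DeltaL}. Therefore $m\ge 2$, and the two distinct orders $N-l_1,N-l_2\in\{2,\dots,N-2\}$ at which a derivative of $f$ vanishes at $c$ furnish, after relabeling in increasing order, the required indices. The computation is entirely mechanical, all the real content having gone into Lemma~\ref{DeltaL}; the only point demanding care—and the place where the hypothesis that $p$ is odd is genuinely used—is checking that $l_1\pm 1$ can never hit a nonzero multiple of $p$, which relies on the admissible range being the \emph{strict} interval $\{2,\dots,p-1\}$ rather than $\{2,\dots,p\}$.
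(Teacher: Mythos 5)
Your proposal is correct and follows essentially the same route as the paper: reduce to the case $m=1$ via Lemma~\ref{1fois}, apply Lemma~\ref{DeltaL} to get $p\mid\Delta$ with $\Delta=l_1-(-1)^{l_1}$, and check that this quantity lies strictly between $0$ and $p$ for all admissible $l_1\in\{2,\dots,p-1\}$. Your explicit parity split is just a slightly more detailed phrasing of the paper's bound $1\le l-(-1)^l\le N-2$.
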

\begin{proof}
If not, in virtue of Lemma \ref{1fois}, there exists a unique index $2\le l\le N-2$ such that $f^{(N-l)}(c)=0$.

We can assume without loss of generality that $f$ is of the form (\ref{p+1}) with $a_1=-1$ and apply Lemma \ref{DeltaL}. Then $m=1$ and 
\begin{equation}
\Delta=\left[
\begin{array}{l l }
-1& l \\
-1 & (-1)^{l}
\end{array}
\right]=l-(-1)^l.
\end{equation}
Observe that $1\le l-(-1)^l\le l+1\le N-2$ for $l\in{2,\cdots, N-3}$. 

\noindent Besides, $N-2-(-1)^{N-2}=N-3$ because $N$ is even. 

\noindent Finally, there is no way for $p$ to divide $\Delta$ and this contradicts Lemma \ref{DeltaL}.
\end{proof}

\begin{remark} We can actually go a bit further, but the results are not as conclusive. For instance, let $f$ be a CA-polynomial of degree $N=12$, and suppose
there are exactly two indices $2\le l_1<l_2<N-1$ such that $f^{(l_1)}$, $f^{(l_2)}$ and $f^{(N-1)}$ have a common root. Then applying Lemma \ref{DeltaL}, we can check that there are only $4$ possibilities for $(l_1,l_2)$: $(l_1,l_2)$ must be $(3,8)$, $(5,6)$, $(6,8)$ or $(6,9)$.
\end{remark}

\begin{remark} In \cite{Cast}, Castryck defines the \emph{type} of a CA-polynomial $f$ of degree $N$ as follows: Let $\{ \alpha_1,\ldots,\alpha_k\}$ denote the roots of $f$. A subset $S\subset \{\alpha_1,\ldots,\alpha_k\}$ is called \emph{covering} if for each $i=1,\ldots,N-1$, there exists $\alpha\in S$ such that $f^{(i)}(\alpha)=0$.
The type of $f$ is defined as
  \[ type(f)= \min\{\hbox{Card } S-1,\ S \hbox{ covering}\}.\]
 Castryck establishes that 
 \[2\le type(f)\le N-3\] 
 for any non-trivial CA-polynomial $f$ (\cite{Cast}; the lower bound follows from Proposition \ref{Compl2}). Our result Proposition \ref{2fois} gives something more: any CA-polynomial $f$ of degree $N=p+1$ satisfies
 \[ type(f)\le N-4.\]
 In particular, for $N=12$, this bound combined with the results of Castryck (loc. cit.) implies that in order to prove the Casas-Alvero conjecture in degree $12$, the only cases remaining to be checked are polynomials of type $6$, $7$ and $8$.
 \end{remark}
Our final result proves the Casas-Alvero conjecture for degree $p+1$ polynomials whose roots are rational numbers:

\begin{proposition}\label{rational}
There is no non-trivial CA-polynomial of degree $N=p+1$, $p\ge 3$ prime, with rational roots. 
\end{proposition}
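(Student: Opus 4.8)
The plan is to reduce to the normal form already prepared for degree $N=p+1$ and then to play the $p$-adic constraints of the second section against the real-rootedness that rationality forces. First I would dispose of the base case $p=3$: there $N=4<5$, so by Proposition \ref{3roots} there is no non-trivial CA-polynomial of degree $4$ at all, and nothing remains to prove. From now on assume $p\ge 5$, so $N\ge 6$ and Proposition \ref{4roots} applies.

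Next I would normalize via Lemma \ref{change} exactly as in (\ref{p+1}) with $a_1=-1$: place a multiple root (one exists since $f$ and $f'$ share a root) at $0$, so that the center of mass $c=1$ is a \emph{simple} root (it is simple because $f'(c)\ne 0$ for $N=p+1$), while the remaining roots $z_1,\dots,z_{N-3}$ satisfy $v_p(z_j)>0$. The one genuinely new input is that, the $z_j$ being rational, $v_p$ is integer-valued, so $v_p(z_j)>0$ sharpens to $v_p(z_j)\ge 1$; equivalently every root other than $1$ reduces to $0$ modulo $p$, and one computes $z_1+\dots+z_{N-3}=N-1=p$. Rationality also makes every root real, so by Gauss--Lucas (Theorem \ref{Gauss-Luca}) each derivative of $f$ is real-rooted with roots in $[\min,\max]$, and I may freely use Corollary \ref{sudbury} and Lemma \ref{simple}. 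In particular $1$ is strictly interior, the largest root is some $z_j>1$ lying on the Gauss--Lucas polygon, and Corollary \ref{sudbury} forbids that extreme root from being shared with any of the top derivatives.

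Then I would run the determinant machinery of Lemma \ref{DeltaL}. Writing $l_1<\dots<l_m$ for the orders $2\le l\le N-2$ with $f^{(N-l)}(1)=0$, the exact relations behind the system (\ref{systeme}) force $p\mid\Delta$, and Proposition \ref{2fois} already yields $m\ge 2$. The point of the rational hypothesis is to add two further layers to this: on one hand the integrality $v_p(a_l)\ge 0$, with $a_l\equiv 0$ whenever $l\notin\{l_1,\dots,l_m\}$, now holds with integer valuations; on the other hand real-rootedness, via Rolle's theorem and Lemma \ref{simple} together with the fact that $1$ is the common center of mass of all the $f^{(k)}$, severely restricts which successive derivatives can actually vanish at $1$. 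Combining the divisibility $p\mid\Delta$ with these sign and interlacing constraints should cut the admissible index patterns $\{l_1,\dots,l_m\}$ down to a short list. For each surviving pattern I would then reconstruct the top derivatives explicitly, integrating $f^{(N-1)}=N!\,z$ downward subject to the prescribed vanishings at $1$ and at the $z_j$ (just as in the proof of Proposition \ref{4roots}), read off the $a_l$, and feed them back into $f(1)=0$ (equivalently (\ref{f(1)=0})) together with $\sum_j z_j=p$ and $v_p(z_j)\ge 1$. The expected outcome is that the configuration is over-determined and collapses the $z_j$ onto one another, driving the number of distinct roots below the bound $5$ of Proposition \ref{4roots}, a contradiction.

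The hard part will be the reduction of admissible patterns in the middle step, and making it \emph{uniform in $p$}. The divisibility $p\mid\Delta$ by itself leaves a list of candidate sets $\{l_1,\dots,l_m\}$ that grows with $p$ --- as the remark following Proposition \ref{2fois} already shows for $N=12$ --- so a naive case analysis will not terminate. The real work is therefore to show that real-rootedness, the ingredient absent from the complex case, eliminates every pattern simultaneously; and the decisive use of rationality will be precisely in converting the purely mod-$p$ relations (\ref{systeme}) into honest constraints on the real roots $z_j$ through the integrality of their valuations. I expect this conversion, rather than any single calculation, to be the crux of the argument.
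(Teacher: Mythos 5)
Your proposal correctly identifies the two normalizations that matter (the form (\ref{p+1}) with $a_1=-1$, and the upgrade from $v_p(z_j)>0$ to $v_p(z_j)\ge 1$ that rationality provides, together with $\sum_j z_j=N-1=p$), but it is a plan rather than a proof, and its central step is left open. You propose to combine $p\mid\Delta$ from Lemma \ref{DeltaL} with real-rootedness, Gauss--Lucas, and Rolle-type interlacing to eliminate all admissible index patterns $\{l_1,\dots,l_m\}$, and you explicitly concede that you do not know how to make this elimination uniform in $p$; that concession is precisely where the proof is missing. Nothing in the proposal supplies the mechanism by which real-rootedness would kill every pattern at once, and the remark after Proposition \ref{2fois} already shows that the determinant criterion alone leaves a residual list of patterns even in a single degree. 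As written, the argument does not close.

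The paper's proof avoids all of this machinery. It never uses real-rootedness, Gauss--Lucas, or the determinant $\Delta$. Instead it takes $k$ to be the \emph{smallest} index in $\{2,\dots,N-2\}$ with $f^{(N-k)}(1)\neq 0$ (which exists by Lemma \ref{1fois}, and which forces $a_l=(-1)^l$ for $l<k$), and applies a single Newton identity (Lemma \ref{Relations} with $l=0$) to write
\[
-k\binom{N}{k}a_k=(-1)^{k-1}\binom{N-1}{k-1}+\sum_{j=0}^{k-1}(-1)^jS_{k-j}\binom{N}{j},
\qquad S_m=\sum_{j=1}^{N-3}z_j^m .
\]
Here is where your key observation is actually cashed in: rationality gives $v_p(z_j)\ge 1$, hence $v_p(S_m)\ge 2$ for $m\ge 2$, while $v_p(S_1)=v_p(p)=1$ and $v_p\bigl(\binom{N-1}{k-1}\bigr)=v_p\bigl(\binom{p}{k-1}\bigr)=1$, so the right-hand side has valuation exactly $1$. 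On the other hand, since $f^{(N-k)}(1)\neq 0$, the Casas-Alvero property forces $f^{(N-k)}$ to vanish at some root of positive valuation, giving $v_p(a_k)>0$ and hence $v_p\bigl(k\binom{N}{k}a_k\bigr)>1$ --- a contradiction that is automatically uniform in $p$ because only one well-chosen index is ever examined. In short: you found the right lever (integrality of the valuations of the roots) but attached it to the wrong machine; replacing the pattern-by-pattern elimination with this one valuation count is what completes the proof.
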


\begin{proof}
Using the same notations as in the  proof of Lemma \ref{p+1}, we may assume that $f$ is of the form
\[
\begin{split}
f(z)=&z^{N}-N z^{N-1}+{N\choose 2}z^{N-2}\\
    &+\cdots+(-1)^{k-1}{N\choose {k-1}}z^{N-k+1}+{N\choose k}a_kz^{N-k}+\cdots+{N\choose 2} a_{N-2}z^2,
    \end{split}\] 
    with
 $v_p(z_j)\ge 1$ for $j=1,\cdots, N-3$. Here, we have denoted by $k$ the smallest index between $2$ and $N-2$ such that $f^{(N-k)}(1)\not=0$ (we know from Lemma \ref{p+1} such a $k$ exists).

 We introduce the notation 
 \[S_m=\sum_{j=1}^{N-3} z_j^m.\] 
 Then we have: $v_p(S_1)=v_p(N-1)=1$, and $v_p(S_j)\ge 2$ for $j=2,\cdots,N-2$. Using Newton formulas (see Lemma \ref{Relations}, $l=0$), we obtain
\[
\begin{split}
 -k{N\choose k}a_k&=\sum_{j=0}^{k-1}(-1)^j(1+S_{k-j}){N\choose j}\\   &=\sum_{j=0}^{k-1}(-1)^j{N\choose j}+\sum_{j=0}^{k-1}(-1)^jS_{k-j}{N\choose j}\\
& =(-1)^{k-1}{{N-1}\choose {k-1}}+\sum_{j=0}^{k-1}(-1)^jS_{k-j}{N\choose j}.\\
 \end{split}\]
 Note that $v_p({N\choose k}a_k)>1$ which will lead to a contradiction:
 
 Case 1: $k=2$. The last equality becomes :
 
 \[-2{N\choose 2} a_2=-(N-1)+S_2-NS_1=-(N-1)+S_2-N(N-1)=-(N+1)(N-1)+S_2.\]
The valuation of the right-hand term is $1$. 
 
 Case 2: $3\le k\le N-2$ : The right-hand term is
 \[(-1)^{k-1}{{N-1}\choose {k-1}}+\sum_{j=0}^{k-2}(-1)^jS_{k-j}{N\choose j}+(-1)^{k-1}S_1{N\choose {k-1}}.\]
 But $v_p(S_{k-j})\ge 2$ for $j=0,\cdots,k-2$, and $v_p(S_1{N\choose {k-1}})=2$, so the valuation of the right-hand term
 is $v_p({{N-1}\choose {k-1}})=1$.
 
 \end{proof}
\section{A final remark about a possible counterexample in degree 12}
  
\begin{proposition}\label{12}
Let $f$ be a non-trivial CA-polynomial of degree $N\ge 1$.  
\begin{itemize}
\item [1)] 
Assume that some prime $q\ge 2$ divides all binomial coefficients $\binom{N}{k}$, $k=1,\cdots,N-1$ except for $k=q^{s}$ and $k=N-q^s=q^{t}$, $s,t\in \NN$. Then the derivatives $f^{(q^{s})}$ and $f^{(N-q^{s})}$ don't have any common root. 
\item [2)] Assume that some prime $q\ge 2$ divides all binomial coefficients $\binom{N}{k}$, $k=1,\cdots,N-1$ except for $k=l_1,\cdots,l_m$. Then there is no $a\in \C$ such that 
\[f(a)=f^{l_1}(a)=\cdots=f^{l_m}(0).\] 
\end{itemize}
\end{proposition}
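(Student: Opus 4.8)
The plan is to transport the $q$-adic method of Draisma and de Jong used throughout this section into coordinates adapted to the hypothetical common root, and then read off a contradiction from the reduction of $f$ (and, for 1), of one of its derivatives) modulo positive valuation. I read the condition in 2) as $f(a)=f^{(l_1)}(a)=\cdots=f^{(l_m)}(a)=0$, and I write $x\equiv y$ for $v_q(x-y)>0$ as before. In both parts the preliminary step is the same. Suppose the relevant common root exists; using Lemma \ref{change} I move it to $0$ and rescale so that $f(z)=z^N+\binom N1 a_1z^{N-1}+\cdots+a_N$ with roots $z_1,\dots,z_N$ satisfying $\min_j v_q(z_j)=0$ (hence all $v_q(z_j)\ge 0$). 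Because $f$ is CA its center of mass $-a_1$ is one of the roots, so $v_q(a_1)\ge 0$; inserting this into the usual induction on the expansion of $f^{(N-l)}$ in the $a_k$ — for each $l=1,\dots,N-1$ the derivative $f^{(N-l)}$ meets $f$ in a root $w_l$ with $v_q(w_l)\ge 0$, and solving for $a_l$ expresses it through the lower $a_j$ and powers of $w_l$ — gives $v_q(a_l)\ge 0$ for all $l$. Writing $e_k$ for the $k$-th elementary symmetric function of the roots, so that $e_k=(-1)^k\binom Nk a_k$, the hypothesis $q\mid\binom Nk$ for non-exceptional $k$ then forces $v_q(e_k)\ge 1$ there.

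For 2) this already does almost everything. The hypotheses translate to $f(0)=0$ and $f^{(l_i)}(0)=0$, i.e.\ $e_N=0$ and $a_{N-l_i}=0$, hence $e_{N-l_i}=0$ for every $i$. Since $\binom Nk=\binom N{N-k}$, the exceptional set $\{l_1,\dots,l_m\}$ is stable under $k\mapsto N-k$, so the relations $e_{N-l_i}=0$ say precisely that $e_{l_j}=0$ for every exceptional $l_j$. Thus every coefficient of $f$ below the leading term has positive valuation (it is literally $0$ at the exceptional indices and at the constant term, and $v_q\ge 1$ elsewhere), so $f\equiv z^N$; as the roots lie in the valuation ring this forces each $z_j\equiv 0$, contradicting $\min_j v_q(z_j)=0$.

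For 1) the exceptional set is $\{q^s,q^t\}$ with $q^s+q^t=N$, and the obstacle is that a common root $c$ of $f^{(q^s)}$ and $f^{(N-q^s)}$ need not be a root of $f$: the constant term need not vanish, and the plain reduction gives only $f\equiv z^N+(-1)^N e_N$. After moving $c$ to $0$ and rescaling, the two vanishing conditions give $a_{q^t}=a_{q^s}=0$, hence $e_{q^s}=e_{q^t}=0$, and as above $v_q(e_k)\ge 1$ for the remaining middle coefficients. Evaluating $f$ at a root of valuation $0$ (one exists by the normalization) and using that all middle coefficients now have positive valuation shows $v_q(e_N)=\sum_j v_q(z_j)=0$, so in fact \emph{every} root of $f$ has valuation exactly $0$. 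The decisive extra step is to reduce the derivative itself: since $a_{q^t}=0$ and, by Lemma \ref{p^r}, $v_q\binom{q^t}{j}\ge 1$ for $1\le j\le q^t-1$, the monic polynomial $\frac{q^t!}{N!}f^{(q^s)}=z^{q^t}+\binom{q^t}1 a_1z^{q^t-1}+\cdots+a_{q^t}$ reduces to $z^{q^t}$; having coefficients in the valuation ring, all of its roots — that is, all roots of $f^{(q^s)}$ — have strictly positive valuation. Now invoke the full CA hypothesis: $f$ and $f^{(q^s)}$ share a root $w$. As a root of $f$ it has $v_q(w)=0$, while as a root of $f^{(q^s)}$ it has $v_q(w)>0$, a contradiction.

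The main obstacle is part 1): reducing $f$ alone yields only $f\equiv z^N+(-1)^N e_N$ and no contradiction, so the argument must instead play the roots of $f$ (all of valuation $0$) against those of $f^{(q^s)}$ (all of positive valuation), with the CA property supplying the forbidden root common to both — this is the step I expect to be least obvious. A secondary point needing care is that the bound $v_q(a_l)\ge 0$ has to be secured in coordinates centred at the common root rather than at the center of mass, which is exactly what ``the center of mass is a root of $f$'' guarantees and what keeps the induction valid. Finally the degenerate exponents $s=0$ or $t=0$, where the relevant derivative is $f'$ or the linear $f^{(N-1)}$, should be checked separately.
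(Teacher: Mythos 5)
Your argument is correct and rests on the same machinery as the paper's proof: the normalization with $\min_j v_q(z_j)=0$, the Draisma--de Jong induction giving $v_q(a_l)\ge 0$, Lemma \ref{p^r}, and a separation of root sets by $q$-valuation; your part 2) is essentially identical to the paper's. The only difference is organizational, in part 1): the paper centers coordinates at the common root of $f$ and $f^{(N-q^s)}$ supplied by the CA hypothesis and concludes directly that all roots of $f^{(N-q^s)}$ have positive valuation while all roots of $f^{(q^s)}$ have valuation zero, whereas you argue by contradiction from a hypothetical common root of the two derivatives and let the clash occur between the roots of $f$ (all of valuation $0$) and those of $f^{(q^s)}$ (all of valuation $>0$) via the CA property applied to that pair. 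Both variants are sound, and you rightly flag the degenerate exponents $s=0$ or $t=0$, where Lemma \ref{p^r} is not needed because the relevant reduced derivative has no middle coefficients.
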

\begin{remark}
1)
This implies in particular that $s\not=t$. This means that there doesn't exist a no-trivial CA-polynomial with degree $N=2q^r$, $q$ prime. But this result is already proved in \cite{Graf}. 
\end{remark}

\begin{proof}

\begin{itemize}

\item [1)] 
We may assume that  $s\le t$ and that  $f$and its derivatives are of the form : 
 
 \begin{equation}\label{derivee3}
\frac{l!}{N!}f^{(N-l)}(z)=z^{l}+{l\choose 1} a_1z^{l-1}+\cdots+{l\choose {1}}a_{l-1}z+a_l,
\end{equation}
with $a_N=a_{q^s}=0$ and  $\min\{v_q(z_j),\ j=1,\cdots,N\}=0$  where we have denoted by $z_1,\cdots, z_N$ the roots of $f$.

As in \cite[Proposition 9]{Dr-Jo}, using equality (\ref{derivee3}) with $l=1,\cdots, N-1$ and $z$ the common root of $f^{(N-l)}$ and $f$, we prove by induction on $l$  that 
\begin{equation}\label{valuations1}
v_q(a_l)\ge 0 \ \ \ \hbox{ for all\ }  \ l=1,\cdots, N.
\end{equation}

Using $f(z_j)=0$ with $v_q(z_j)=0$, we deduce that $v_q(a_{q^t})=0$. 

Now we apply formula (\ref{derivee3}) with $l=q^s$ then $l=N-q^s$. We find, with the help of  Lemma \ref{p^r}, that all roots of $f^{(N-q^s)}$ have q-valuation $>0$ while all roots of $f^{(q^s)}$ have q-valuation $=0$. 

\item [2)] We proceed by contradiction. Let us repeat the beginning of the proof in 1) but this time assuming that $a_N=a_{l_1}=\cdots=a_{l_m}=0$. We still get 
\begin{equation}\label{valuations1}
v_q(a_l)\ge 0 \ \ \ \hbox{ for all\ }  \ l=1,\cdots, N.
\end{equation}
Using $f(z_j)=0$ with $v_q(z_j)=0$ we have
\[-z_j^{N}={N\choose 1} a_1z_j^{N-1}+\cdots+{N\choose {N-1}}a_{N-1}z_j.\]
But by assumption, the q-valuation of the right-hand term is $\ge 1$  and we reach a contradiction.
\end{itemize}

\end{proof}

\begin{remark}[Case N=12]
Observe that $2$ (respectively $3$) divides $\binom{12}{k}$, $k=1,\cdots,11$, except for $k=4, 8$ (resp. k=3,9). Thus, a possible counterexample in degree $12$ should satisfy : $f^{(4)}$ and $f^{(8)}$ (respectively $f^{(3)}$ and $f^{(9)}$) don't share any root. 


\end{remark}

\end{document}